\documentclass[a4paper, 12pt]{article}
\usepackage{amsmath}
\usepackage{amsfonts}
\usepackage{amsthm}
\usepackage{graphicx}
\usepackage{hyperref}

\newtheorem*{teo}{Theorem}
\newtheorem{lem}{Lemma}
\newtheorem{prop}{Proposition}

\newtheorem{defin}{Definition}
\newtheorem{teo1}{Theorem}
\newtheorem*{teoA}{Theorem A}
\newtheorem*{teoB}{Theorem B}
\newtheorem*{teoC}{Theorem C}

\newcommand{\ric}{\operatorname{Ric}}

\newcommand{\diver}{\operatorname{div}}
\newcommand{\hess}{\operatorname{Hess}}
\newcommand{\dist}{\operatorname{dist}}

\newcommand{\inj}{\operatorname{inj}}
\newcommand{\Cut}{\operatorname{Cut}}
\newcommand{\lp}{\operatorname{L}}

\bibliographystyle{plain}

\title{Parabolic Minimal Surfaces in $\mathbb{M}^{2}\times\mathbb{R}$}
\author{Vanderson Lima\thanks{The author was supported by CNPq-Brazil.}}
\date{}

\begin{document}

\maketitle

\begin{abstract}
\noindent Let $\mathbb{M}^{2}$ be a complete non compact orientable surface of non negative curvature. We prove in this paper some theorems involving parabolicity of minimal surfaces in $\mathbb{M}^{2}\times\mathbb{R}$. 

First, using a characterization of $\delta$-parabolicity we prove that under additional conditions on $\mathbb{M}$, an embedded minimal surface with bounded gaussian curvature is proper. 

The second theorem states that under some conditions on $\mathbb{M}$, if $\Sigma$ is a properly immersed minimal surface with finite topology and one end in $\mathbb{M}\times\mathbb{R}$, which is transverse to a slice $\mathbb{M}\times\{t\}$ except at a finite number of points, and such that $\Sigma\cap(\mathbb{M}\times\{t\})$ contains a finite number of components, then $\Sigma$ is parabolic. 

In the last result, we assume some conditions on $\mathbb{M}$ and prove that if a minimal surface in $\mathbb{M}\times\mathbb{R}$ has height controlled by a logarithmic function, then it is parabolic and has a finite number of ends.
\end{abstract}

\providecommand{\abs}[1]{\lvert#1\rvert}

\linespread{1} 

\section{Introduction}

Let $\mathbb{M}^{2}$ be a complete non compact orientable surface with non-negative curvature. Under these conditions $\mathbb{M}\times\mathbb{R}$ is complete and has non-negative sectional curvature, in particular non-negative Ricci curvature. Recently, using some of the results of \cite{S.Y2}, G.Liu classified complete non-compact $3$-manifolds with non-negative Ricci curvature 
\begin{teo}[Liu, \cite{L}]
Let $N$ be a complete noncompact $3$-manifold with non-negative Ricci curvature. Then either $N$ is diffeomorphic to $\mathbb{R}^{3}$ or its universal cover $\widetilde{N}$ is isometric to a Riemannian product $\mathbb{M}\times\mathbb{R}$, where $\mathbb{M}$ is a complete surface with non-negative sectional curvature.
\end{teo}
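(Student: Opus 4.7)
My first step would be to pass to the universal cover $\widetilde{N}$, which is complete, simply connected, and still satisfies $\ric\geq 0$. The cornerstone is then the Cheeger--Gromoll splitting theorem, asserting that a complete manifold of non-negative Ricci curvature containing a line splits isometrically as a metric product with $\mathbb{R}$. This suggests organizing the argument around the dichotomy: either $\widetilde{N}$ admits a line, or it does not.

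In the first case, the splitting theorem applied directly gives $\widetilde N$ isometric to $\mathbb{M}^{2}\times \mathbb{R}$, where $\mathbb{M}^{2}$ is complete and has non-negative Ricci curvature; in dimension two this coincides with non-negative sectional curvature, so this is precisely the second alternative of the theorem. Verifying completeness of the factor and checking that the splitting is the one asserted is essentially automatic.

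In the second case, when $\widetilde{N}$ contains no line, I would aim to prove the first alternative, namely $N \cong \mathbb{R}^{3}$. The natural route is first to observe that if $\pi_{1}(N)$ were infinite, a standard orbit-based argument---taking a sequence of deck translates of a base point whose distances go to infinity and subconverging the corresponding minimizing segments---would produce a line in $\widetilde N$, contradicting the case hypothesis; hence $\pi_{1}(N)$ is finite. Next one uses the Schoen--Yau minimal surface machinery from \cite{S.Y2}: stable minimal surface arguments combined with Busemann-function estimates from $\ric \geq 0$ should rule out nontrivial essential $2$-cycles in $\widetilde{N}$ and force $\widetilde{N}$ to be diffeomorphic to $\mathbb{R}^{3}$. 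Finally, a nontrivial finite group cannot act freely on $\mathbb{R}^{3}$ with a $3$-manifold quotient (finite groups have infinite cohomological dimension), so $\pi_{1}(N)$ is trivial and $N\cong \mathbb{R}^{3}$.

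The main obstacle is the ``no line'' case. Under only a Ricci lower bound one cannot invoke the soul theorem or the classical Cheeger--Gromoll structure theory for nonnegatively curved open manifolds, since those require non-negative sectional curvature. Extracting topological information from $\ric\geq 0$ alone forces one to rely on the delicate analytic theory of stable minimal surfaces in the spirit of Schoen and Yau, and arranging this machinery to yield a full diffeomorphism classification of $\widetilde{N}$---together with compatibility with the deck group action---is where the technical heart of the proof must lie.
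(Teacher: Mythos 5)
The paper offers no proof of this statement: it is quoted from Liu's article \cite{L} purely as background, so there is no internal argument to compare yours against and I can only assess your sketch on its own terms. The skeleton you propose --- dichotomy on the existence of a line in $\widetilde{N}$, Cheeger--Gromoll splitting in one case, Schoen--Yau minimal surface theory in the other --- is the right general landscape, and the line case is handled correctly (in dimension two a Ricci bound is a sectional bound, so the factor $\mathbb{M}$ has non-negative sectional curvature as required).

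The genuine gap is in the ``no line'' case, specifically the claim that infinite $\pi_{1}(N)$ yields a line in $\widetilde{N}$ by a ``standard orbit-based argument.'' That argument requires translating the midpoints of the minimizing segments from $p$ to $g_{n}p$ back into a \emph{compact} fundamental domain in order to extract a convergent subsequence of segments; since $N$ is noncompact this fails and the construction only produces a ray. The implication itself is false in general under $\ric\geq 0$: Nabonnand's complete metrics of positive Ricci curvature on $\mathbb{R}^{3}\times\mathbb{S}^{1}$ have infinite fundamental group, while their universal covers, having positive Ricci curvature, contain no line. In dimension three the implication happens to be true, but only as a consequence of Liu's theorem itself, so invoking it at this stage is circular. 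Relatedly, the appeal to Schoen--Yau machinery to ``force $\widetilde{N}\cong\mathbb{R}^{3}$'' inverts the actual logic: in Liu's argument one assumes $N$ is \emph{not} diffeomorphic to $\mathbb{R}^{3}$, extracts from the resulting nontrivial topology either a line or a complete stable (area-minimizing) surface, and uses the stability inequality together with $\ric\geq 0$ to show the latter is totally geodesic and flat with vanishing normal Ricci curvature, which yields a local splitting that propagates to the universal cover. The minimal surface machinery thus delivers the splitting alternative, not the $\mathbb{R}^{3}$ alternative, and this construction --- the technical heart you correctly identify as missing --- is absent from your sketch.
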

In particular it follows from the proof of this result that if $N$ is not flat or does not have positive Ricci curvature then its universal cover splits as a product $\mathbb{M}\times\mathbb{R}$. So the spaces $\mathbb{M}\times\mathbb{R}$ are in fact general examples of a very important class of $3$-manifolds. 

We are interested in minimal surfaces in $\mathbb{M}\times\mathbb{R}$, where $\mathbb{M}$ is as above. In particular we want information about the topology and the conformal structure. It is important to study under which hypotheses we can guarantee that a minimal surface is proper. Concerning the topology, we know that there is no compact minimal surface in these spaces. So, one can study the genus and the number of ends of such minimal surfaces. Concerning the conformal structure, one important property is {\it parabolicity}. Our results are inspired by analogous results in $\mathbb{R}^{3}$. 

First we study the problem of properness.  Bessa, Jorge and Oliveira-Filho studied
this problem for manifolds with nonnegative Ricci curvature and obtained some
partial results in $\mathbb{R}^{3}$.
\begin{teo}[Bessa, Jorge, Oliveira-Filho, \cite{B.J.O}]
Let $N^{3}$ be a complete Riemannian $3$-manifold of bounded geometry and positive Ricci curvature. Let $f:\Sigma^{2} \to N^{3}$ be a complete injective minimal immersion, where $\Sigma$ is a complete oriented surface with bounded curvature. 
\begin{enumerate}
\item If $N$ is compact, then $\Sigma$ is compact;
\item If $N$ is not compact, then $f$ is proper.
\end{enumerate}
\end{teo}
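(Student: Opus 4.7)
The argument is by contradiction. In case (1), assume $\Sigma$ is noncompact; since $N$ is compact, any divergent sequence $\{p_n\}\subset\Sigma$ has image $\{f(p_n)\}$ admitting a subsequential limit $q\in N$. In case (2), failure of properness supplies a compact $K\subset N$ with $f^{-1}(K)$ noncompact in $\Sigma$, and the same extraction yields a divergent sequence $\{p_n\}\subset\Sigma$ with $f(p_n)\to q\in N$. So in both cases we have an interior accumulation point $q$ of $f(\Sigma)$ whose preimages escape every compact set of $\Sigma$.

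The central technical step is a uniform local graph decomposition. Under bounded geometry of $N$ (bounded sectional curvature and positive injectivity radius) and $|A_\Sigma|$ bounded, standard elliptic estimates for the minimal surface equation provide a radius $\delta>0$, independent of $n$, such that the component of $f^{-1}(B_N(f(p_n),\delta))$ containing $p_n$ is an intrinsic disk mapped by $f$ onto a normal minimal graph over the exponential of the disk of radius $\delta$ in $T_{f(p_n)}N$, with uniform $C^{2,\alpha}$ bounds. Transporting these graphs to a fixed chart near $q$ via the exponential map and applying Arzela--Ascoli, I extract a $C^{2}$-limit minimal lamination $\mathcal{L}$ of a neighborhood $U\ni q$. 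Injectivity of $f$ together with divergence of $\{p_n\}$ in $\Sigma$ forces distinct tail indices to produce mutually disjoint local sheets of $f(\Sigma)\cap B_N(q,\delta/2)$; passing to the limit, $\mathcal{L}$ contains at least two (in fact a sequence of) distinct leaves accumulating at $q$.

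To close the argument, invoke a Frankel-type principle in positive Ricci curvature to rule out the existence of such a lamination. By the strong maximum principle two distinct minimal leaves cannot touch tangentially, so the accumulating leaves must be pairwise disjoint. Extend $\mathcal{L}$ to a maximal minimal lamination in $N$; the Ricci hypothesis, via a second-variation argument applied to a minimizing geodesic segment joining two nearby leaves (with endpoints varied in the leaf directions), yields a length-decreasing variation, contradicting the existence of a minimizing segment. The main obstacle, and the part requiring the bounded-geometry hypothesis most crucially, is the legitimacy of this Frankel-type step when the leaves are noncompact: one must produce the minimizing normal segment and carry out the second variation with controlled geometry, which is where uniform curvature bounds on both $\Sigma$ and $N$, and the positive injectivity radius of $N$, combine to salvage the classical compact Frankel argument.
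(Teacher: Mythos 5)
This statement is quoted from \cite{B.J.O} and the paper gives no proof of it, so there is nothing internal to compare against line by line; the closest analogue is the paper's own proof of Theorem A, which runs the same first two stages as yours (uniform local graph decomposition from the curvature bounds, passage to a limit minimal lamination at a point of non-properness) and then diverges from you at the decisive step. Your first two paragraphs are essentially correct and standard: bounded geometry of $N$ plus bounded $|A_{\Sigma}|$ gives uniform graphical neighborhoods, the strong maximum principle makes the local limit sheets either coincide or be disjoint, and the closure of $f(\Sigma)$ becomes a minimal lamination with a limit leaf $L$ through the accumulation point $q$ (this also handles case (1), since a noncompact $\Sigma$ in a compact $N$ must accumulate somewhere).

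The gap is in your final, Frankel-type step. Frankel's second-variation argument requires a geodesic segment of positive length realizing the distance between two disjoint minimal surfaces and meeting both orthogonally. In your situation the two natural candidates are the limit leaf $L$ and the sheets of $f(\Sigma)$ accumulating on it, and for these the infimum of the distance is \emph{zero} while the surfaces remain disjoint: no minimizing normal segment exists, so the variation you propose cannot even be set up. Choosing instead two distinct local sheets of $f(\Sigma)$ does not help, since they belong to one connected immersed surface and are not disjoint minimal surfaces in the sense Frankel requires; and for genuinely noncompact disjoint leaves the infimum of distance need not be attained even when positive, which is exactly the failure mode you flag but do not repair --- bounded geometry alone does not produce the minimizing segment. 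The correct way to convert positive Ricci curvature into a contradiction, and the route both \cite{B.J.O} and the present paper (Theorem A, via \cite{M.P.R}) take, is through \emph{stability}: a limit leaf of a minimal lamination has stable universal cover, and by Fischer-Colbrie--Schoen \cite{FC.S} a complete orientable stable minimal surface in a $3$-manifold of nonnegative Ricci (or scalar) curvature must be totally geodesic with $\ric(\eta,\eta)\equiv 0$ in the normal direction, which is impossible when $\ric>0$. Replacing your third paragraph by this stability argument closes the proof; as written, the Frankel step does not.
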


A major breakthrough was the work of Colding and Minicozzi \cite{C.M2}, where it was proved that a complete minimal surface of finite topology embedded in $\mathbb{R}^{3}$ is proper. After this, Meeks and Rosenberg proved that if $\Sigma$ is a complete embedded minimal surface in $\mathbb{R}^{3}$ which has positive injectivity radius, then $\Sigma$ is proper, \cite{M.R2}. Finally, Meeks and Rosenberg proved that if $f: \Sigma \to \mathbb{R}^{3}$ is an injective minimal immersion, with $\Sigma$ complete and of bounded curvature, then $f$ is proper, \cite{M.R1}. We extend the last result to the case of a product $\mathbb{M}\times\mathbb{R}$:
\begin{teoA}
Let $\mathbb{M}$ be a complete simply connected orientable non-compact surface such that $0 \leq K_{\mathbb{M}} \leq \kappa$. Let $f: \Sigma \rightarrow \mathbb{M}\times\mathbb{R}$ be an injective minimal immersion of a complete, connected Riemannian surface of bounded curvature. Then the map $f$ is proper.
\end{teoA}

Next we focus on surfaces with finite topology and one end. The results in \cite{C.M2} and \cite{M.R} imply that every complete, embedded minimal surface in $\mathbb{R}^{3}$ of finite genus and one end is properly embedded and intersects some plane transversely in a single component, and so, is parabolic. In \cite{M.R1}, Meeks and Rosenberg gave an independent proof that the surface is parabolic without the additional assumption that it is embedded. Namely they proved:  
\begin{teo}[Meeks, Rosenberg, \cite{M.R1}]\label{mrp}
Let $\Sigma$ be a surface of finite topology and one end, and let $f: \Sigma \rightarrow \mathbb{R}^{3}$ be a proper minimal immersion. Suppose that $f$ is transverse to a plane $P$ except at a finite number of points, and $f^{-1}(P)$ contains a finite number of components. Then, $\Sigma$ is parabolic.
\end{teo}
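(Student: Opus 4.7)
The plan is to deduce parabolicity of $\Sigma$ by cutting it along $f^{-1}(P)$ into finitely many pieces, each lying in a closed halfspace of $\mathbb{R}^{3}$ and hence parabolic (as a surface with boundary) by the universal superharmonic function technique of Collin--Kusner--Meeks--Rosenberg, and then combining these pieces to obtain parabolicity of the whole.

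After translating so that $P = \{z = 0\}$, the hypotheses ensure $f^{-1}(P)$ is a smooth proper $1$-submanifold of $\Sigma$ away from a finite set of tangency points, with only finitely many connected components. Since $\Sigma$ has finite topology and a single end, one can choose a compact set $K \subset \Sigma$ containing every tangency point and every compact component of $f^{-1}(P)$, and meeting each non-compact component transversely, so that the end $E := \Sigma \setminus \mathrm{int}\, K$ is a topological half-open annulus and $f^{-1}(P) \cap E$ is a finite disjoint union of smooth proper arcs. These arcs decompose $E$ into finitely many subregions $R_{1}, \ldots, R_{N}$, each of which is mapped by $f$ entirely into one of the closed halfspaces $\{z \geq 0\}$ or $\{z \leq 0\}$, with boundary contained in $P \cup \partial K$.

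Each $R_{j}$ is then a properly immersed minimal surface in a closed halfspace with boundary on the bounding plane together with a compact piece in $\partial K$. By the Collin--Kusner--Meeks--Rosenberg construction, the halfspace admits a positive function whose restriction to any such minimal surface is a proper superharmonic function. This exhibits a proper positive superharmonic function on each $R_{j}$, which is a standard characterization of parabolicity for surfaces with boundary: every bounded harmonic function on $R_{j}$ is determined by and recoverable from its boundary values via harmonic measure. In this way the end $E$ is exhibited as a finite union of parabolic subsurfaces glued along a finite family of proper arcs.

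To pass from parabolicity of the pieces to parabolicity of $\Sigma$, one uses that $\Sigma$ is parabolic as a Riemann surface without boundary if and only if its unique end $E$ is parabolic as a Riemann surface with compact boundary $\partial K \cap E$. Given a bounded harmonic function $u$ on $E$ vanishing on $\partial E$, the idea is to apply parabolicity on each piece to control $u|_{R_{j}}$ by its values on the dividing arcs, and then use the maximum principle across the finite decomposition to force $u \equiv 0$. The main obstacle is precisely this last gluing step: the restriction of $u$ to the dividing arcs is not a priori small, and the supremum of $|u|$ may be transferred from one piece to another across a common arc. Overcoming this will require either an iterated harmonic-measure argument exploiting the finiteness of the decomposition, or the direct construction of a proper positive superharmonic function on all of $E$ by combining the universal halfspace functions from the two sides of $P$ in a way that remains superharmonic across the dividing arcs.
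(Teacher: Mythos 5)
Your first step is sound and coincides with the paper's: the two pieces $\Sigma(+)$ and $\Sigma(-)$ of $\Sigma$ lying in the closed halfspaces determined by $P$ are parabolic (by the universal superharmonic function of Collin--Kusner--Meeks--Rosenberg in $\mathbb{R}^{3}$, or by Rosenberg's result in $\mathbb{M}\times\mathbb{R}$), and every proper subdomain of a parabolic surface is parabolic, so each region $R_{j}$ of your decomposition is parabolic. The gap you flag at the end, however, is not a technical loose end but the entire content of the theorem: a finite union of parabolic pieces glued along proper noncompact arcs need not be parabolic. Concretely, if the annular end were conformally $\{z\in\mathbb{C};\ \epsilon\le|z|<1\}$ and one of the intersection arcs spiralled so as to accumulate on the whole ideal circle $\mathbb{S}^{1}$, the complementary regions could each be parabolic (the ideal boundary is screened from Brownian motion by the windings of the arc) while the end itself is hyperbolic. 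Neither of your proposed remedies closes this: an iterated harmonic-measure argument founders on precisely the spiralling scenario, and a single global superharmonic function of CKMR type, such as $\log r - h\arctan h + \tfrac{1}{2}\log(h^{2}+1)$, is proper and superharmonic on $\Sigma$ only under a height bound $|h|\le c\log r$ (this is the hypothesis of Theorem C), which is not available here.

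What the actual proof does instead is argue by contradiction against hyperbolicity of the end and rule out the spiralling. Assume the end is conformally $\{\epsilon\le|z|<1\}$. Since $h$ is harmonic and the core curve of the annular end bounds in $\Sigma$, one forms the holomorphic function $g=h+ih^{*}$; transversality makes $h^{*}$ strictly monotone along each noncompact arc $\alpha_{k}$ of $f^{-1}(P)$, so $g$ properly and injectively parametrizes an interval of the imaginary axis along $\alpha_{k}$. If some $\alpha_{k}$ accumulated at more than one point of $\mathbb{S}^{1}$, it would accumulate on a whole interval $I_{1}$; choosing a small half-disc $D$ with $\partial_{\infty}D\subset I_{1}$ on which $g$ omits a compact interval $J$ of the imaginary axis, Fatou's theorem gives radial limits of $g$ a.e.\ on $\partial_{\infty}D$, the infinitely many crossings of $D$ by $\alpha_{k}$ force those limits to equal the single limiting value of $g\circ\alpha_{k}$, and Privalov's theorem yields a contradiction. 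Hence each arc has a single limit point on $\mathbb{S}^{1}$, so some open half-disc $U$ at the ideal boundary misses $f^{-1}(P)$ entirely; $U$ is then a subdomain of the parabolic surface $\Sigma(+)$ or $\Sigma(-)$, yet it fails to have full harmonic measure on its finite boundary --- a contradiction. This asymptotic control of the intersection arcs, which uses minimality through the conjugate function $h^{*}$, is the missing idea in your proposal.
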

The Half-Space Theorem of Hoffman and Meeks states that a properly immersed minimal surface in $\mathbb{R}^{3}$ which is above a plane is a parallel plane, \cite{H.M}. Thus the condition that a minimal surface be transverse to a plane is natural. Rosenberg proved the following Half-Space Theorem for product spaces:
\begin{teo}[Rosenberg, \cite{H}]
Let $\mathbb{M}$ be a complete non-compact surface satisfying the following conditions:
\begin{enumerate}
\item $K_{\mathbb{M}} \geq 0$; 
\item There is a point $p \in M$ such that the geodesic curvatures of all geodesic circles with center $p$ and radius $r \geq 1$ are uniformly bounded.
\end{enumerate}
If $\Sigma$ is a properly immersed minimal surface in a half-space $\mathbb{M}\times [t_0,+\infty)$, then $\Sigma$ is a slice $\mathbb{M}\times\{s\}$, for some $s > t_0$.

\end{teo}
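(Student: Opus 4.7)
The plan is to extend the Hoffman--Meeks argument for the Euclidean half-space theorem to $\mathbb{M}\times\mathbb{R}$, replacing the Euclidean catenoids by rotationally symmetric minimal surfaces built around a vertical axis $\{p\}\times\mathbb{R}$.

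\textbf{Construction of the catenoidal barriers.} Fix the point $p\in\mathbb{M}$ provided by hypothesis (2) and set $\rho(q)=\dist_{\mathbb{M}}(q,p)$. I would look for minimal graphs $t=h(\rho(q))$ with $h(\rho_{0})=0$ and $h'(\rho_{0}^{+})=+\infty$. Writing $\kappa_{g}(\rho)=\Delta_{\mathbb{M}}\rho$ for the geodesic curvature of the circle of radius $\rho$ about $p$, the minimal surface equation reduces to
\[
h'' + \kappa_{g}(\rho)\,h'\,(1+h'^{2})=0,
\]
and the substitution $v=h'/\sqrt{1+h'^{2}}$ linearises this to $v'=-\kappa_{g}\,v$, so that with $v(\rho_{0})=1$ one obtains the closed form $v(\rho)=\exp\!\bigl(-\int_{\rho_{0}}^{\rho}\kappa_{g}(s)\,ds\bigr)$. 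This produces a one-parameter family of half-catenoids $C^{+}(\rho_{0})=\{(q,h(\rho(q))):\rho(q)\geq\rho_{0}\}$ indexed by the waist radius $\rho_{0}$. Hypothesis (1) yields $\kappa_{g}\geq 0$ and the asymptotics $\kappa_{g}(\rho)\sim 1/\rho$ as $\rho\to 0$, while hypothesis (2) gives a uniform upper bound $\kappa_{g}\leq K_{0}$ for $\rho\geq 1$. Splitting the interval $[\rho_{0},R]$ at a small but fixed $\rho_{1}$, the contribution to $h(R)-h(\rho_{0})$ from $[\rho_{0},\rho_{1}]$ is controlled by the Euclidean-like quantity $\rho_{0}\cosh^{-1}(\rho_{1}/\rho_{0})$ and that from $[\rho_{1},R]$ by $R\,v(\rho_{1})$; both vanish as $\rho_{0}\to 0$. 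Consequently, for each fixed outer radius $R$ and each $\varepsilon>0$ one can arrange a half-catenoid whose vertical span over $[\rho_{0},R]$ is smaller than $\varepsilon$.

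\textbf{Running the contradiction.} Suppose $\Sigma\subset\mathbb{M}\times[t_{0},+\infty)$ is properly immersed minimal but is not a horizontal slice, let $s_{0}=\inf\{t:(q,t)\in\Sigma\}$ and normalise by a vertical translation so that $s_{0}=0$. If $s_{0}$ is attained at some $q_{0}\in\Sigma$, then $\Sigma$ is tangent from above to $\mathbb{M}\times\{0\}$ and the geometric maximum principle for minimal surfaces immediately forces $\Sigma=\mathbb{M}\times\{0\}$, contradicting the assumption. Hence $\Sigma\subset\mathbb{M}\times(0,+\infty)$ with points approaching height $0$; properness pushes these points off to infinity in $\mathbb{M}$, yielding a sequence $q_{n}\in\Sigma$ with $t(q_{n})\to 0$ and $\rho(q_{n})\to\infty$. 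Fix $n$ large so that $t(q_{n})$ is small, choose $R>\rho(q_{n})$, and then pick $\rho_{0}$ so small that the vertical span of $C^{+}(\rho_{0})$ over $[\rho_{0},R]$ is strictly less than $t(q_{n})$. Translating $C^{+}(\rho_{0})$ far downward places it entirely below $\Sigma$; sliding it back up gives a first contact with $\Sigma$ at some translation parameter $T^{*}$. The parameters can be arranged so that this contact is interior: the waist sits at height $-T^{*}\leq 0$ and is disjoint from $\Sigma\subset\mathbb{M}\times(0,+\infty)$, and $R$ was chosen beyond $q_{n}$ so the outer edge of the barrier stays above $\Sigma$ at the contact time. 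At this interior contact point both surfaces are tangent, both are minimal, and $\Sigma$ lies above, so the maximum principle together with unique continuation forces $\Sigma$ to contain the translated half-catenoid $C^{+}(\rho_{0})-T^{*}$. The waist of that surface is at height $-T^{*}\leq 0$ and belongs to $\Sigma$, contradicting $\Sigma\subset\mathbb{M}\times(0,+\infty)$.

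\textbf{Main obstacle.} The analytically delicate step is running the contradiction, specifically arranging the translation so that the first contact between the translated barrier and $\Sigma$ is interior to the barrier rather than at the waist, at the outer edge, or "at infinity". This is where the thinness estimate of the barrier construction (and hence both hypotheses on $\mathbb{M}$) is used in an essential way, in combination with properness of $\Sigma$ and with the escape of the approximating sequence $(q_{n})$ to infinity in $\mathbb{M}$. The barrier construction itself is a routine ODE analysis once the reduction to $v'=-\kappa_{g}\,v$ has been carried out.
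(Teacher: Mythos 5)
First, a point of orientation: the paper does not prove this theorem --- it is quoted from Rosenberg \cite{H} --- but Lemma \ref{lap} of the paper is precisely the key estimate of Rosenberg's argument, and that argument is entirely different from yours. Rosenberg's route is potential-theoretic: for each $c$ the piece $\Sigma\cap\{h\le c\}$ has bounded height, so (as in the proof of Theorem C) $\log r$ plus a bounded correction depending on the harmonic function $h$ is a proper non-negative superharmonic function on it, whence this piece is parabolic; then $h$ is a bounded harmonic function on a parabolic surface with constant boundary value $c$, so $h\equiv c$ there, forcing $\Sigma$ to be a slice. No barrier is ever constructed, and no symmetry of $\mathbb{M}$ is needed.

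Your catenoid-barrier approach has a genuine gap at its very first step: the reduction of the minimal graph equation for $t=h(\rho(q))$ to the ODE $v'=-\kappa_{g}(\rho)v$ requires $\Delta_{\mathbb{M}}\rho$ to be a function of $\rho$ alone, i.e.\ it requires $\mathbb{M}$ to be rotationally symmetric about $p$ (and requires $p$ to be a pole, which hypothesis (2) does not grant either). For a general $\mathbb{M}$ satisfying (1)--(2) the geodesic curvature of the distance circle varies along the circle, and no graph of the form $t=h(\rho)$ is minimal. The natural repair --- use the Laplacian comparison $\Delta_{\mathbb{M}}\rho\le 1/\rho$ coming from $K_{\mathbb{M}}\ge 0$ to transplant the Euclidean catenoid profile as a comparison surface --- yields $\operatorname{div}\bigl(v\nabla\rho\bigr)=v'+v\,\Delta_{\mathbb{M}}\rho\le 0$, i.e.\ a lower barrier whose mean curvature vector points \emph{away} from the minimal surface lying above it; that is the wrong sign for the maximum principle at the contact point, and getting the right sign would require a positive lower bound on $\kappa_{g}$, which fails for instance on a capped flat cylinder (allowed by the hypotheses), where the rotational ``catenoids'' have linear height growth and never flatten. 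Separately, even granting genuine barriers, your sweep does not force an interior first contact: the truncated annular barrier over $\rho\in[\rho_{0},R]$ with vertical span less than $t(q_{n})$ can be translated upward until its waist reaches height $0$ while remaining strictly below the point $q_{n}$ (its height over the radius $\rho(q_{n})$ never exceeds its span), so no crossing with $\Sigma$ is forced before the waist --- a boundary circle, where the maximum principle is unavailable --- enters the half-space. Hoffman and Meeks force the contact by shrinking the neck parameter of a family whose outer boundary is pinned at a fixed positive height while its radius diverges, combined with a separation argument; translating one fixed compact piece vertically does not reproduce this mechanism.
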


Based on these results we prove the following:

\begin{teoB}
Suppose $\mathbb{M}$ satisfies the conditions of the previous theorem. Let $\Sigma$ be a surface of finite topology and one end and let $f: \Sigma \rightarrow \mathbb{M}\times\mathbb{R}$ be a proper minimal immersion. Suppose that $f$ is transverse to a slice $\mathbb{M}\times\{t_0\}$ except at a finite number of points, and that $f^{-1}(\mathbb{M}\times\{t_0\})$ contains a finite number of components. Then $\Sigma$ is parabolic.
\end{teoB}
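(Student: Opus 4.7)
The plan is to mirror the argument of Meeks and Rosenberg for Theorem~\ref{mrp}, substituting Rosenberg's Half-Space Theorem in $\mathbb{M}\times\mathbb{R}$ for the Hoffman--Meeks theorem in $\mathbb{R}^{3}$. First, I reduce the problem to the end of $\Sigma$. Since $\Sigma$ has finite topology and a single end, there is a compact set $K\subset\Sigma$ for which $E:=\Sigma\setminus K$ is diffeomorphic to $S^{1}\times[0,\infty)$. After enlarging $K$ if necessary, I absorb into $K$ the finitely many non-transverse points of $f$ with $\mathbb{M}\times\{t_{0}\}$ together with all compact components of $f^{-1}(\mathbb{M}\times\{t_{0}\})$, so that $\Gamma:=f^{-1}(\mathbb{M}\times\{t_{0}\})\cap E$ becomes a finite disjoint union of properly embedded smooth arcs $\gamma_{1},\dots,\gamma_{k}$ in $E$.

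These arcs partition $E$ into finitely many planar regions $R_{1},\dots,R_{m}$, each lying entirely in one of the half-spaces $\{h\geq t_{0}\}$ or $\{h\leq t_{0}\}$, where $h:=f^{*}t$ is the (harmonic) height function on $\Sigma$. Since $\Sigma$ is parabolic if and only if every $R_{i}$ is parabolic as a Riemann surface with boundary (gluing a compact piece along compact boundary arcs preserves parabolicity), the task is reduced to establishing parabolicity of each $R_{i}$. Fixing one, say $R_{i}\subset\{h\geq t_{0}\}$, the function $u:=h-t_{0}$ is a non-negative harmonic function on $R_{i}$ vanishing on $\partial R_{i}\cap\Gamma$. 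I plan to show $u$ is proper on $R_{i}$: this, combined with the planarity of $R_{i}$, will yield a proper holomorphic map from the universal cover of $R_{i}$ to the closed upper half-plane, built from $u$ and a locally defined harmonic conjugate, realising $R_{i}$ as a conformal half-plane or half-strip, both of which are parabolic.

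The main obstacle is proving that $u$ is proper. By properness of $f$, intrinsic divergence $p_{n}\to\infty$ in $R_{i}$ forces $f(p_{n})\to\infty$ in $\mathbb{M}\times\mathbb{R}$; if $u(p_{n})$ remained bounded, the horizontal projections $\pi_{\mathbb{M}}(f(p_{n}))$ would escape to infinity in $\mathbb{M}$. To rule this out I would use a pointed compactness argument: the uniform bound on the geodesic curvatures of geodesic circles in $\mathbb{M}$ permits extraction of a pointed subsequential limit of $(\mathbb{M},\pi_{\mathbb{M}}(f(p_{n})))$ to some non-negatively curved pointed space $\mathbb{M}_{\infty}$ still satisfying Rosenberg's hypotheses. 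The corresponding translates of the minimal immersion would subconverge to a properly immersed minimal surface in $\mathbb{M}_{\infty}\times\mathbb{R}$ confined to $\{h\geq t_{0}\}$. Rosenberg's Half-Space Theorem would force this limit to be a horizontal slice, while either the survival of a boundary arc at height $t_{0}$ in the limit or the presence of interior points at two distinct heights near the basepoint (extracted from a fixed-radius intrinsic neighbourhood of $p_{n}$) would contradict this. Making the limit procedure rigorous and verifying the persistence of Rosenberg's hypotheses under such limits, in the absence of a transitive isometry group on $\mathbb{M}$, is the technical heart of the proof.
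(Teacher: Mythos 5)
Your proposal takes a genuinely different route from the paper, and two of its load-bearing steps fail. The first is the reduction: you claim $\Sigma$ is parabolic if and only if every region $R_{i}$ is parabolic, justifying this only by the (true, but irrelevant) fact that attaching a compact piece along compact arcs preserves parabolicity. The regions $R_{1},\dots,R_{m}$ are glued to one another along the \emph{noncompact} arcs $\gamma_{j}$, and a finite union of parabolic bordered surfaces meeting along noncompact curves need not be parabolic; this is exactly the difficulty the theorem exists to overcome. Indeed, the paper already imports from \cite{H} that $\Sigma(+)=\{h\geq t_{0}\}$ and $\Sigma(-)=\{h\leq t_{0}\}$ are parabolic --- which immediately gives parabolicity of every $R_{i}$, since a proper subdomain of a parabolic surface is parabolic --- and nevertheless still has to do all the real work. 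That work is conformal: one assumes the end is conformally $\{\epsilon\leq|z|<1\}$, shows that either every intersection arc has a well-defined limit on the ideal circle (whence some half-disc $U$ lies inside $\Sigma(+)$ or $\Sigma(-)$, is parabolic, yet fails to have full harmonic measure --- a contradiction), or some arc accumulates on a whole subinterval of the ideal circle, in which case the holomorphic map $g=h+ih^{*}$ on a small half-disc omits a segment of the imaginary axis, has radial limits almost everywhere by Fatou's theorem, and those limits are constant on a set of positive measure, contradicting Privalov's theorem. Nothing in your reduction substitutes for this.

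The second failure is the claim at the technical heart of your plan: that $u=h-t_{0}$ is proper on each $R_{i}$. This is false already in the model case $\mathbb{M}=\mathbb{R}^{2}$ with $\Sigma$ the helicoid and $t_{0}=0$: the intersection with the slice is a single properly embedded line, each complementary half-helicoid lies in a half-space, yet $\{0\leq h\leq c\}$ is noncompact in each half for every $c>0$ (it contains points of arbitrarily large horizontal radius), so $h$ is not proper there --- even though each half-helicoid is parabolic, being a proper subdomain of the conformally planar helicoid. Since the statement you are trying to prove by the pointed blow-up argument is false, no amount of care with limits of $(\mathbb{M},\pi_{\mathbb{M}}(f(p_{n})))$ and Rosenberg's Half-Space Theorem will rescue it, and the subsequent identification of $R_{i}$ with a conformal half-plane or half-strip has no starting point. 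You should discard this scheme and argue via the harmonic measure of the ideal boundary of the end, as in the Meeks--Rosenberg proof the paper adapts.
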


Next we focus on surfaces with more that one end. A major breakthrough was the proof of the generalized Nitsche conjecture in $\mathbb{R}^{3}$:
\begin{teo}[Collin, \cite{C}]
Let $\Sigma$ be a properly embedded minimal surface in $\mathbb{R}^{3}$ with at least two ends. Then an annular end of $\Sigma$ is asymptotic to a plane or to the end of a catenoid.
\end{teo}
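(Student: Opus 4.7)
The plan is to reduce the statement to a classical classification theorem of Schoen, which asserts that a properly embedded minimal annular end in $\mathbb{R}^3$ of finite total curvature is asymptotic to a plane or to a half-catenoid. Thus the real content is to prove that every annular end $E$ of $\Sigma$ has finite total curvature; combined with Huber's theorem this will already put $E$ in Schoen's framework.

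First, I would exploit the hypothesis of at least two ends by using catenoids as barriers. Since $\Sigma$ is properly embedded and has a second end disjoint from $E$, sliding large catenoids from infinity and invoking the maximum principle (in the spirit of Hoffman--Meeks) forces $E$ into a slab or into the complement of a suitably placed catenoid. A canonical tangent plane $P$ at infinity then exists, and for sufficiently high values $t$ of the height function over $P$, the level sets $E\cap\{h=t\}$ become a single simple closed curve transverse to $E$.

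Next I would promote this transversality to a global graph structure: the orthogonal projection $\pi:E\to P$ is a local diffeomorphism outside a compact set, it is proper, and on an annular end it has a well-defined degree. Using the one-ended nature of the cross-sections from the previous step and elementary covering-space arguments, $\pi$ must in fact be a diffeomorphism onto the exterior of a disk in $P$. Consequently $E$ is represented as the graph of a minimal function $u$ defined on an exterior planar domain.

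The main obstacle, and the technical heart of Collin's original argument, is the final step: a minimal graph over the exterior of a disk in $\mathbb{R}^2$ has finite total curvature. Here one uses that the Gauss image of the graph avoids a fixed neighborhood of the equator of $\mathbb{S}^2$ (because the graph lies in a half-space), together with subharmonicity properties of the stereographically projected Gauss map and a control of the conformal type of the end via Huber's theorem, to extract a uniform bound on $\int_E |K|\,dA$. Once this is established, Schoen's classification closes the argument. I expect this analytic step to require the most care; the earlier geometric steps, while subtle, rest on already well-developed barrier techniques, whereas the passage from graph-over-exterior-domain to finite total curvature is where genuinely new ideas are needed.
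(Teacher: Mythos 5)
The paper does not prove this statement: it is quoted as Collin's theorem (reference \cite{C}) and used only as background motivation for Theorem C, so there is no in-paper argument to compare yours against. Judged on its own, your outline has a genuine gap at its central step, and the gap sits exactly where the historical difficulty of the problem lies.

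You assert that, once the level sets $E\cap\{h=t\}$ are single Jordan curves transverse to $E$, ``the orthogonal projection $\pi:E\to P$ is a local diffeomorphism outside a compact set,'' and that covering-space arguments then exhibit $E$ as a graph over an exterior planar domain. Transversality of $E$ to the horizontal planes controls $\nabla_{\Sigma}h$, i.e.\ it says the tangent planes are nowhere \emph{horizontal} along the level curves; for $\pi$ to be a local diffeomorphism you need the complementary condition that the tangent planes are nowhere \emph{vertical}, i.e.\ $\langle\nu,e_{3}\rangle\neq 0$ (a vertical plane is transverse to every horizontal plane, yet its projection to $P$ is nowhere a local diffeomorphism). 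Nothing in the barrier and ordering step excludes vertical tangencies arbitrarily far out on the end, and ruling them out is essentially equivalent to the theorem: if one knew a priori that the end is a minimal graph over an exterior domain in the plane, finite total curvature would already follow from the classical work of Bers, Schoen and L.~Simon on exterior solutions of the minimal surface equation, and Collin's theorem would not have been a long-standing open problem. Collin's actual proof avoids the graph reduction entirely; he establishes the generalized Nitsche conjecture (a properly embedded minimal annulus in a half-space meeting each horizontal plane in a Jordan curve has finite total curvature) by working with the conformal structure of the annulus and the holomorphic differential $dh+i\,dh^{*}$, not by first producing a graph. Two secondary problems: for a top or planar-type end the claim that $E\cap\{h=t\}$ is a single compact Jordan curve for all large $t$ can fail (the level set may be empty or non-compact near the asymptotic height), and ``the Gauss image avoids a fixed neighborhood of the equator because the graph lies in a half-space'' is not correct --- a half-space constraint does not bound the Gauss map away from the equator, and even being a graph only keeps the Gauss map off the equator pointwise, not uniformly.
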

Let $\Sigma$ be as in the last theorem. The set $\mathcal{E}_{\Sigma}$ of all the ends of $\Sigma$ has a natural topology that makes it a compact Hausdorff space. The limit points in $\mathcal{E}_{\Sigma}$ are called the {\it limit ends} of $\Sigma$ and an end which is not a limit end is called a {\it simple end}. To $\Sigma$ is associated a unique plane $P$ passing through the origin in $\mathbb{R}^{3}$ called the limit tangent plane at infinity of $\Sigma$, \cite{C.H.M}. The ends of $\Sigma$ are linearly ordered by their relative heights over $P$ and this linear ordering, up to reversing it, depends only on the proper ambient isotopy class of $\Sigma$ in $\mathbb{R}^{3}$, \cite{F.M}. Since $\mathcal{E}_{\Sigma}$ is compact and the ordering is linear, there exists a unique {\it top end} which is the highest end and a unique {\it bottom end} which is lowest in the associated ordering. The ends of $\Sigma$ that are neither top nor bottom ends are called {\it middle ends}. In the proof of the ordering theorem, one shows that every middle end of $\Sigma$ is contained between two catenoids in the following sense: if $E$ is an end of $\Sigma$ there are $c_{1} > 0$ and $r_1 > 0$ such that $E \subset \{(x_{1},x_{2},x_{3}); |x_{3}| \leq c_{1}\log r, r^{2} = x_{1}^{2} + x_{2}^{2}, r \geq r_{1}\}$. 

In \cite{C.K.M.R} Collin, Kusner, Meeks and Rosenberg proved that if $\Sigma$ is a properly immersed minimal surface with compact boundary in $\mathbb{R}^{3}$ which is contained between two catenoids, then $\Sigma$ has quadratic area growth. Furthermore, $\Sigma$ has a finite number of ends. As a consequence the middle ends of a properly embedded minimal surface in $\mathbb{R}^{3}$ are never {\it limit ends}. We explain what it means for a properly immersed minimal surface of $\mathbb{M}\times\mathbb{R}$ to be contained between two catenoids and generalize the result above:
\begin{teoC}
Let $\mathbb{M}$ be a complete non-compact surface satisfying the following conditions:
\begin{enumerate}
\item $0 \leq K_{\mathbb{M}} \leq \kappa$;
\item $\mathbb{M}$ has a pole $p$; 
\item The geodesic curvatures of all geodesic circles with center $p$ and radius $r \geq 1$ are uniformly bounded.
\end{enumerate} 
Let $\Sigma$ be a properly immersed minimal surface inside the region of $\mathbb{M}\times\mathbb{R}$ defined by $|h|\leq c_{2}\log r$, for some constant $c_{2}>0$ and $r\geq1$. Then $\Sigma$ is parabolic. Moreover, if $\Sigma$ has compact boundary, then $\Sigma$ has quadratic area growth and a finite number of ends. 
\end{teoC}

The paper is organized as follows. In section 2 we present some results about the geometry of the spaces $\mathbb{M}\times\mathbb{R}$ and its minimal surfaces. In sections 3 and 4 we give some well known definitions and enunciate some results involving parabolicity and laminations. In section 5 we prove theorem A. In section 6 we prove theorems B and C.
\\\\
\textbf{Acknowledgments.} This work is part of the author's Ph.D. thesis at IMPA. The
author would like to express his sincere gratitude to his advisor Harold Rosenberg for his patience, constant encouragement and guidance. He would also like to thank Marco A. M. Guaraco for making the figures that appear in the paper, and Beno\^it Daniel, Jos\'e Espinar, L\' ucio Rodriguez, Magdalena Rodriguez and William Meeks III for discussions and their interest in this work. Finally he also thanks the referee for suggestions and corrections.

\section{The Geometry of $\mathbb{M}^{2}\times\mathbb{R}$}

Some of the results of this section are well known, but we prove them here for completeness.

\begin{lem}
Let $\mathbb{M}$ be a complete non-compact orientable surface with non-negative sectional curvature. Then $\mathbb{M}$ is homeomorphic to $\mathbb{R}^{2}$ or isometric to a flat cylinder $\mathbb{S}^{1}\times\mathbb{R}$.
\end{lem}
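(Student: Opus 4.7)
The plan is to apply the Cheeger--Gromoll soul theorem: since $\mathbb{M}$ is a complete, non-compact Riemannian manifold with $K \geq 0$, it admits a soul $S$---a compact, totally geodesic, totally convex submanifold---such that $\mathbb{M}$ is diffeomorphic to the normal bundle of $S$ in $\mathbb{M}$. As $S$ lies inside a surface, $\dim S \in \{0,1\}$. If $\dim S = 0$, then $S$ is a point, its normal bundle is $\mathbb{R}^{2}$, and $\mathbb{M}$ is homeomorphic to $\mathbb{R}^{2}$. If $\dim S = 1$, then $S$ is a simple closed geodesic $\gamma$; its normal line bundle over $S^{1}$ is orientable (because $\mathbb{M}$ is) and hence trivial, so $\mathbb{M}$ is diffeomorphic to $S^{1} \times \mathbb{R}$. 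This settles the topology in both cases, but not yet the metric in the second.

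For the isometric statement in the second case, I would pass to the universal cover $\widetilde{\mathbb{M}}$, which inherits $K \geq 0$. Since $\gamma$ is a deformation retract of $\mathbb{M}$ (via the Sharafutdinov retraction), the inclusion induces an isomorphism $\pi_{1}(\gamma) \to \pi_{1}(\mathbb{M})$, so the preimage of $\gamma$ in $\widetilde{\mathbb{M}}$ is connected and is the universal cover of $\gamma$; call it $\tilde\gamma \cong \mathbb{R}$. Total convexity of $\gamma$ then lifts: any geodesic in $\widetilde{\mathbb{M}}$ joining two points of $\tilde\gamma$ projects to a geodesic in $\gamma$ and therefore lifts back inside $\tilde\gamma$. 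In particular, the minimizer between $\tilde\gamma(s)$ and $\tilde\gamma(t)$ is the arc of $\tilde\gamma$ itself, so $d(\tilde\gamma(s),\tilde\gamma(t)) = |s-t|$, and $\tilde\gamma$ is a line. The Cheeger--Gromoll splitting theorem now yields an isometry $\widetilde{\mathbb{M}} \cong \mathbb{R} \times N$; since $N$ is a complete simply connected $1$-manifold, $N \cong \mathbb{R}$, and $\widetilde{\mathbb{M}}$ is flat Euclidean $\mathbb{R}^{2}$. Finally, $\pi_{1}(\mathbb{M}) \cong \mathbb{Z}$ acts on $\mathbb{R}^{2}$ by free, properly discontinuous, orientation-preserving isometries---hence by translations---so the quotient is a flat cylinder $S^{1} \times \mathbb{R}$.

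The main obstacle is the second-case rigidity: identifying a genuine line in $\widetilde{\mathbb{M}}$ and then cleanly running the splitting machinery and the isometry classification. A shorter route that avoids the soul theorem entirely would use Huber's theorem to force $\mathbb{M}$ to have finite topology, combined with the Cohn--Vossen inequality $\int_{\mathbb{M}} K\,dA \leq 2\pi\chi(\mathbb{M})$, which restricts the non-compact orientable possibilities to $\chi(\mathbb{M}) \in \{0,1\}$---i.e.\ to $\mathbb{R}^{2}$ or $S^{1}\times\mathbb{R}$; in the latter case $\chi = 0$ pinches the integral to $0$ and forces $K \equiv 0$, after which the same flat-quotient argument supplies the desired isometry.
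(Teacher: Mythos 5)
Your main argument is correct but follows a genuinely different route from the paper. The paper's proof is exactly the ``shorter route'' you sketch at the end: Huber's theorem (since $K^{-}\equiv 0$) gives finite topology together with the Cohn--Vossen bound $0\le\int_{\mathbb{M}}K\,d\mu\le 2\pi(2-2g-n)$, which forces $g=0$ and $n\in\{1,2\}$; in the two-ended case the total curvature is pinched to $0$, so $K\equiv 0$ and $\mathbb{M}$ is a flat cylinder. Your primary argument instead runs the Cheeger--Gromoll machinery: the soul is a point (plane case) or a closed geodesic whose orientable normal line bundle trivializes (cylinder case), and the rigidity is obtained by lifting the totally convex soul to a line in the universal cover and invoking the splitting theorem, then classifying the free orientation-preserving $\mathbb{Z}$-action on flat $\mathbb{R}^{2}$ as a group of translations. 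All the steps check out -- in particular the preimage of the soul is connected and is a single geodesic line because the inclusion of the soul is a homotopy equivalence, and total convexity does descend to the statement $d(\tilde\gamma(s),\tilde\gamma(t))=|s-t|$. The trade-off: the paper's integral-geometric argument is shorter and dispatches both cases at once, while your approach avoids Huber/Cohn--Vossen entirely, produces the diffeomorphism type directly from the soul construction, and handles the metric rigidity by splitting rather than by the vanishing of total curvature; it is the argument that generalizes most readily beyond dimension two.
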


\begin{proof}
Since $K_{\mathbb{M}}^{-} \equiv 0$, by Huber's Theorem $\mathbb{M}$ has finite topology and $$0 \leq \int_{\mathbb{M}}K_{\mathbb{M}}d\mu \leq 2\pi(2 -2g - n),$$ where $g$ is the genus of $M$ and $n$ his number of ends, see \cite{W1, W2}. Since $\mathbb{M}$ is non-compact and $n \geq 1$, we have $$1 \leq n + 2g \leq 2,$$ but $n + 2g $ is an integer; thus the only possibility is $g = 0$, $n = 1,2$.\\

If $n = 1$, $\mathbb{M}$ is homeomorphic to $\mathbb{R}^{2}$. If $n = 2$, $\mathbb{M}$ has the topology of $\mathbb{S}^{1}\times\mathbb{R}$ and $$\int_{\mathbb{M}}K_{\mathbb{M}}d\mu = 0,$$ thus $K_{\mathbb{M}} \equiv 0$ and $\mathbb{M}$ is isometric to $\mathbb{S}^{1}\times\mathbb{R}$ endowed with a flat metric.
\end{proof}

\begin{lem}
Let $\mathbb{M}$ be a complete non-compact surface with sectional curvature satisfying $0 \leq K_{\mathbb{M}} \leq \kappa$. Then $\mathbb{M}$ has positive injectivity radius, in particular the same holds for $\mathbb{\mathbb{M}}\times\mathbb{R}$.
\end{lem}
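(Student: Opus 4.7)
My plan is to apply the previous lemma to split $\mathbb{M}$ into two geometric types. If $\mathbb{M}$ is isometric to a flat cylinder $\mathbb{S}^{1}(r)\times\mathbb{R}$, the injectivity radius is constant and equal to $\pi r>0$, and we are done. The nontrivial case is when $\mathbb{M}$ is homeomorphic to $\mathbb{R}^{2}$, hence simply connected.

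In that case I would use the upper curvature bound $K\leq\kappa$ via comparison geometry. Rauch's theorem gives that the conjugate radius at each point is at least $\pi/\sqrt{\kappa}$ (to be read as $+\infty$ when $\kappa=0$, in which case $\mathbb{M}$ is flat and simply connected, so isometric to $\mathbb{R}^{2}$ with infinite injectivity radius). It then remains to bound below the length of the shortest geodesic loop based at each point $p$. Supposing for contradiction that some loop $\gamma$ at $p$ has length less than $2\pi/\sqrt{\kappa}$, by simple connectivity $\gamma$ bounds an immersed disk $D$; Gauss--Bonnet applied on $D$ (accounting for the turning angle at the corner) combined with $K\leq\kappa$ forces a lower bound on $\mathrm{Area}(D)$, while the Bol--Fiala isoperimetric inequality valid on simply connected surfaces with $K\leq\kappa$ gives a competing upper bound on $\mathrm{Area}(D)$ in terms of the length of $\gamma$, producing a contradiction for sufficiently short loops. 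Alternatively one can appeal to the Cheeger--Gromov--Taylor injectivity radius estimate, which needs only $|K|\leq\kappa$ and a uniform local volume lower bound, the latter furnished by G\"unther comparison on the simply connected $\mathbb{M}$. Either approach yields a positive lower bound on $\inj_{\mathbb{M}}$ depending only on $\kappa$.

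The conclusion for $\mathbb{M}\times\mathbb{R}$ is then immediate: geodesics in a Riemannian product split as products of geodesics in each factor, and $\mathbb{R}$ has infinite injectivity radius, so $\inj_{\mathbb{M}\times\mathbb{R}}(p,t)=\inj_{\mathbb{M}}(p)$, and the bound carries over.

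The principal obstacle is the simply connected case: the bound $K\geq 0$ alone does not exclude short geodesic loops (closed geodesics exist on $\mathbb{S}^{2}$, which is simply connected), so both the upper curvature bound $\kappa$ and the non-compact topology of $\mathbb{R}^{2}$ must be used in an essential way to feed into a comparison theorem.
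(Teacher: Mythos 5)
Your argument is essentially the paper's: reduce to the simply connected case via the preceding lemma, use $K_{\mathbb{M}}\leq\kappa$ to bound the conjugate radius below by $\pi/\sqrt{\kappa}$ so that vanishing injectivity radius produces arbitrarily short geodesic loops, and apply Gauss--Bonnet to the disc such a loop bounds (the paper gets $2\pi=\int_{D_j}K+\theta_j\leq\kappa|D_j|+\pi$) to reach a contradiction. The only divergence is in how the area of that disc is controlled: the paper simply asserts that $|D_j|$ is small for $j$ large, whereas you invoke Bol--Fiala (or Cheeger--Gromov--Taylor) to justify it --- a reasonable way to fill in the one step the paper leaves implicit, though note that Bol--Fiala only excludes intermediate areas, so the branch where $|D|$ is near $4\pi/\kappa$ still has to be ruled out.
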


\begin{proof}
By the previous lemma either $\mathbb{M}$ is a flat cylinder, which has positive injectivity radius, or $\mathbb{M}$ is homeomorphic to $\mathbb{R}^{2}$. Suppose in the last case that $\inj_{\mathbb{M}} = 0$. Since $K_{\mathbb{M}} \leq \kappa$ the exponential map $\exp_{q}: B_{\frac{\pi}{\sqrt{\kappa}}}(0) \rightarrow \mathbb{M}$ has no critical points for each $q \in \mathbb{M}$. Then for each positive integer $j$ sufficiently large there is a point $p_{j}$ such that $\exp_{p_{j}}$ is not injective in the geodesic ball $B_{1/j}(p_{j})$, which implies there are two geodesics $\gamma_{j},\sigma_{j}: [0,l] \rightarrow \mathbb{M}$ beginning in $p_{j}$ which meet at the same endpoint $q_{j}$ in the boundary of $B_{1/j}(p_{j})$ with angle equal to $\pi$ $\bigl(q_{j}$ realizes the distance from $p_{j}$ to $\Cut(p_{j})$; see \cite{MC}$\bigr)$. This gives us a geodesic loop $\alpha_{j}$ with one angular vertex at $p_{j}$ which has exterior angle $\theta_{j} \leq \pi$. Since $\mathbb{M}$ is simply connected $\alpha_{j}$ bounds a disc $D_{j}$ in $\mathbb{M}$. By the Gauss-Bonnet Theorem
$$2\pi = \int_{D_{j}}K_{\mathbb{M}}\ d\mu + \theta_{j} \leq \kappa|D_{j}| + \pi.$$

However for $j$ sufficiently large $|D_{j}|$ is small and $\kappa|D_{j}| + \pi < 2\pi$, which is a contradiction. Therefore $\inj_{\mathbb{M}} > 0$.  
\end{proof}

\begin{lem}[\cite{E.R}]\label{tot.geod}
Let $\mathbb{M}$ be a complete connected non flat surface. Let $\Sigma$ be a complete totally geodesic surface in $\mathbb{M}\times\mathbb{R}$. Then $\Sigma$ is of the form $\alpha\times\mathbb{R}$, where $\alpha$ is a geodesic of $M$, or $\Sigma = \mathbb{M}\times\{t\}$ for some $t \in \mathbb{R}$.
\end{lem}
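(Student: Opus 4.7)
The plan is to exploit the fact that the vertical vector field $T=\partial_t$ on $\mathbb{M}\times\mathbb{R}$ is parallel, and to study its decomposition $T=T^{\top}+T^{\perp}$ along $\Sigma$. Since $\Sigma$ is totally geodesic, the identity $\overline{\nabla}_X T=0$ for $X$ tangent to $\Sigma$ splits, by separating tangential and normal components, into $\nabla_X T^{\top}=0$ and $\nabla^{\perp}_X T^{\perp}=0$. Hence $T^{\top}$ is a parallel tangent vector field on $\Sigma$, and in particular the scalar $c:=|T^{\top}|$ is constant on the connected surface $\Sigma$.

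The proof then proceeds by case analysis on $c\in[0,1]$. If $c=0$, then $T$ is everywhere normal to $\Sigma$, so $T_p\Sigma\subset T_p\mathbb{M}$ at every $p$; this forces $\Sigma$ to lie in, and by completeness equal, a horizontal slice $\mathbb{M}\times\{t\}$. If $c=1$, then $T\in T\Sigma$; the unit vector $Y\in T\Sigma$ orthogonal to $T$ is automatically horizontal, and since $\Sigma$ is two-dimensional with $T$ a parallel unit vector field, differentiating $\langle T,Y\rangle=0$ and $\langle Y,Y\rangle=1$ shows that $Y$ is parallel as well. Its integral curves in $\Sigma$ are therefore horizontal geodesics of $\mathbb{M}\times\mathbb{R}$, which project to a geodesic $\alpha$ of $\mathbb{M}$; combined with the vertical flow of $T$ they sweep out $\Sigma=\alpha\times\mathbb{R}$.

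The hypothesis that $\mathbb{M}$ is non-flat is needed to exclude the intermediate case $c\in(0,1)$. Here both $T^{\top}/c$ and the orthogonal unit tangent $Y$ are parallel by the same two-dimensional argument, and $Y$ is again horizontal since $\langle Y,T\rangle=\langle Y,T^{\top}\rangle+\langle Y,T^{\perp}\rangle=0$. In arclength coordinates $(s,u)$ along $Y$ and $T^{\top}/c$, a direct computation of $d\pi_{\mathbb{M}}$ applied to this orthonormal frame yields the pullback metric $g_\pi=ds^{2}+(1-c^{2})\,du^{2}$, which is flat, complete, and bilipschitz equivalent to the intrinsic metric of $\Sigma$. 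Since the horizontal part of $T^{\top}/c$ has nonzero norm $\sqrt{1-c^{2}}$, the projection $\pi_{\mathbb{M}}|_\Sigma\colon(\Sigma,g_\pi)\to(\mathbb{M},g_\mathbb{M})$ is a local isometry from a complete Riemannian surface, hence a Riemannian covering. It is therefore surjective onto the connected surface $\mathbb{M}$, and the flatness of $g_\pi$ transfers to $\mathbb{M}$, contradicting the non-flatness hypothesis.

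The main obstacle lies in this last case: one must verify the orthogonality of $Y$ and the horizontal part of $T^{\top}/c$ in $T\mathbb{M}$ (so that $g_\pi$ has the stated diagonal form), and then invoke the completeness of $(\Sigma,g_\pi)$ — coming from the bilipschitz equivalence with the intrinsic flat metric when $c<1$ — to apply the classical result that a local isometry from a complete Riemannian manifold is a covering.
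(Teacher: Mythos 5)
Your proof is correct, and although its opening move coincides with the paper's --- both of you first show that the vertical component of the normal is constant (your $c=|T^{\top}|$ equals $\sqrt{1-\nu^{2}}$ for the paper's $\nu=\langle\eta,\partial_{t}\rangle$, obtained there from $X\langle\eta,\partial_{t}\rangle=\langle\nabla_{X}\eta,\partial_{t}\rangle=0$) and then split into the cases $c=0$, $c=1$, $0<c<1$ --- you resolve the crucial intermediate case by a genuinely different argument. The paper feeds the constant $\nu$ into the Jacobi equation $\Delta_{\Sigma}\nu+\bigl(\ric(\eta,\eta)+|A|^{2}\bigr)\nu=0$ to conclude $K_{\mathbb{M}}\circ\Pi\equiv0$ along $\Sigma$, and must then prove $\Pi(\Sigma)=\mathbb{M}$ by a graph-continuation argument: if the projection were not onto, local graphs of uniformly bounded geometry would fail to extend past some boundary point of $\Pi(\Sigma)$, forcing tangent planes to become vertical there and contradicting the fact that $\nu$ is a nonzero constant. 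You instead read off the flatness of $\Sigma$ directly from the global parallel orthonormal frame $\{T^{\top}/c,\,Y\}$, and you get surjectivity of $\Pi|_{\Sigma}$ together with the flatness of $\mathbb{M}$ in one stroke from the classical theorem that a local isometry out of a complete manifold is a Riemannian covering, applied to the pullback metric $g_{\pi}$. The two points you flag do check out: $\langle d\Pi(Y),d\Pi(T^{\top})\rangle=\langle Y,T^{\top}\rangle-\langle Y,\partial_{t}\rangle\langle T^{\top},\partial_{t}\rangle=0$, and $g_{\pi}\geq(1-c^{2})\,g_{\Sigma}$ gives completeness of $g_{\pi}$ when $c<1$. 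In short, your route avoids the stability/Jacobi machinery and the somewhat delicate bounded-geometry extension step, at the price of invoking the covering theorem; the paper's route avoids constructing a covering but needs that limit argument to force the projection to be onto.
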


\begin{proof} 
Let $\Pi$ be the projection of $\mathbb{M}\times\mathbb{R}$ to $\mathbb{M}$. Let $\eta$ be a unit normal to $\Sigma$ and define $\nu = \langle \eta,\partial_{t}\rangle$. Since $\Sigma$ is totally geodesic we have
\begin{eqnarray}
\label{gauss}K_{\Sigma}(p) &=& K_{\mathbb{M}}\bigl(\Pi(p)\bigr)\nu(p), \forall p \in \Sigma,\\
\label{deriv}X\langle\eta,\partial_{t}\rangle &=& \langle\nabla_{X}\eta,\partial_{t}\rangle \equiv 0, \forall X \in T\Sigma
\end{eqnarray}
where \eqref{gauss} is just the Gauss equation. So $\nu$ is constant, and we can suppose $\nu \geq 0$. If $\nu = 0$, then $\Sigma$ is of the form $\alpha\times\mathbb{R}$. If $\nu = 1$, then $\Sigma$ is a slice.

Suppose $0 < \nu < 1$. We know that $$\Delta_{\Sigma}\nu + \bigl(\ric(\eta,\eta) + |A|^2\bigr)\nu = 0,$$ 
and by equation \eqref{deriv}, $\Delta_{\Sigma}\nu = 0$. Thus $0 = \ric(\eta,\eta) = K_{\mathbb{M}}\bigl(\Pi(p)\bigr)(1 - \nu^2)$ which implies $K_{\mathbb{M}}\bigl(\Pi(p)\bigr) = 0$. It follows from equation \eqref{gauss} that $\Sigma$ is flat. Then there is a $\delta > 0$ such that, for any $p \in \Sigma$ a neighborhood of $p$ in $\Sigma$ is a graph (in exponential coordinates) over the disc $D_{\delta} \subset T_{p}\Sigma$ of radius $\delta$, centered at the origin of $T_{p}\Sigma$. This graph, denoted by $G_{p}$, has bounded geometry. The number $\delta$ is independent of $p$ and the bound on the geometry of $G_{p}$ is uniform as well.

We claim that $\Pi(\Sigma) = \mathbb{M}$. Suppose the contrary. Then there exists a bounded open set $\Omega \subset \Pi(\Sigma)$ and $q_0 \in \partial \Omega$ such that, for some point $p \in \Pi^{-1}(\Omega)$, a neighborhood of $p$ in $\Sigma$ is a vertical graph of a function $f$ defined over $\Omega$ and this graph does not extend to a minimal graph over any neighborhood of $q_0$. 

We can identify $\Omega$ with $\Omega\times\{0\}$. Let $\{q_n\} \subset \Omega$ be a sequence converging to $q_0$ and $p_{n} = \bigl(q_{n},f(q_{n})\bigr)$. Let $\Sigma_{n}$ denote the image of $G_{p_{n}}$ under the vertical translation taking $p_{n}$ to $q_{n}$. There is a subsequence of $\{q_n\}$ (which we also denote by $\{q_n\}$) such that the tangent planes $T_{q_n}(\Sigma_{n})$ converge to some vertical plane $P \subset T_{q_0} \bigl(\mathbb{M}\times\mathbb{R}\bigr)$. In fact, if this were not true, for $q_{n}$ close enough to $q_0$, the graph of bounded geometry $G_{p_n}$ would extend to a vertical graph beyond $q_0$. Hence $f$ would extend beyond $q_0$, a contradiction. So $T_{p_n}\Sigma$ must become almost vertical at $p_n$ for $n$ sufficiently large, which means that $\eta(p_n)$ must become horizontal. But $\nu$ is a constant different from 0, a contradiction. 

Then $\Pi(\Sigma) = \mathbb{M}$. Since $K_{\mathbb{M}}\circ\Pi \equiv 0$, it follows that $\mathbb{M}$ is a complete flat surface, which contradicts our assumption.
\end{proof}

\begin{lem}[\cite{H}]
Let $\Sigma$ be a minimal surface of $\mathbb{M}\times\mathbb{R}$. Then the height function $h: \mathbb{M}\times\mathbb{R} \rightarrow \mathbb{R}$, $h(q,t) = t$, is harmonic on $\Sigma$.
\end{lem}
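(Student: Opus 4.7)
The plan is to use the standard identity relating the intrinsic Laplacian of a restricted function with its ambient Hessian on a submanifold. The key structural fact is that in the Riemannian product $\mathbb{M}\times\mathbb{R}$ the vertical vector field $\partial_t$ is parallel, i.e.\ $\nabla^{N}\partial_t\equiv 0$, where $\nabla^{N}$ denotes the Levi-Civita connection of $N=\mathbb{M}\times\mathbb{R}$. Since the ambient gradient of the height function is exactly $\nabla^{N}h=\partial_t$, this immediately gives $\hess_{N}h\equiv 0$.

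Concretely, I would pick an orthonormal frame $\{e_1,e_2\}$ on $\Sigma$ and compute
\begin{equation*}
\Delta_{\Sigma}h=\sum_{i=1}^{2}\bigl(e_i(e_i(h))-(\nabla^{\Sigma}_{e_i}e_i)(h)\bigr).
\end{equation*}
Since $e_i(h)=\langle e_i,\partial_t\rangle$ and $\partial_t$ is parallel, $e_i(e_i(h))=\langle \nabla^{N}_{e_i}e_i,\partial_t\rangle$, whereas $(\nabla^{\Sigma}_{e_i}e_i)(h)=\langle \nabla^{\Sigma}_{e_i}e_i,\partial_t\rangle$. Subtracting and using the Gauss formula $\nabla^{N}_{e_i}e_i=\nabla^{\Sigma}_{e_i}e_i+II(e_i,e_i)$ yields
\begin{equation*}
\Delta_{\Sigma}h=\sum_{i=1}^{2}\langle II(e_i,e_i),\partial_t\rangle=\langle 2\vec{H},\partial_t\rangle,
\end{equation*}
where $\vec{H}$ is the mean curvature vector. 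Minimality of $\Sigma$ means $\vec{H}\equiv 0$, hence $\Delta_{\Sigma}h\equiv 0$.

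There is essentially no obstacle here: the whole proof reduces to the parallelism of $\partial_t$ in a product metric together with the Gauss formula, and the minimality hypothesis is used precisely once, at the final step, to kill the mean curvature term. The only care required is choosing the sign convention for $II$ consistent with the Gauss formula so that the normal components assemble into $\vec{H}$ with the correct factor.
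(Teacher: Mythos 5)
Your proof is correct and takes essentially the same route as the paper: both arguments come down to the fact that $\nabla h=\partial_{t}$ is parallel in the product metric (the paper phrases this via $\partial_{t}$ being a Killing field with $\diver\partial_{t}=0$ and $\langle\nabla_{\eta}\partial_{t},\eta\rangle=0$), so the ambient Hessian of $h$ vanishes and minimality kills the remaining term $\langle\vec{H},\partial_{t}\rangle$. The only cosmetic difference is that the paper starts from $\Delta h=0$ on $\mathbb{M}\times\mathbb{R}$ and splits $\partial_{t}$ into tangential and normal parts, whereas you expand $\Delta_{\Sigma}h$ directly via the Gauss formula.
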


\begin{proof}
Let $E_{1}, E_{2}, \eta$ be an orthonormal frame in a neighborhood of a point of $\Sigma$, where $\eta$ is normal to $\Sigma$. Since $\partial_{t}$ is a Killing vector field on $\mathbb{M}\times\mathbb{R}$, we have $$\diver\partial_{t} = 0 = \langle\nabla_{\eta}\partial_{t},\eta\rangle.$$

Write $\partial_{t} = \nabla h = X + \nabla_{\Sigma}h$, where $X$ is normal to $\Sigma$. Then
\begin{eqnarray*}
0 = \Delta h = \sum_{i}\bigl[\langle\nabla_{E_{i}}\nabla_{\Sigma}h,E_{i}\rangle + \langle\nabla_{E_{i}}X,E_{i}\rangle\bigr] \\ = \Delta_{\Sigma}h - \sum_{i}\langle X,\nabla_{E_{i}}E_{i}\rangle = \Delta_{\Sigma}h - \langle X,\vec{H}\rangle = \Delta_{\Sigma}h.
\end{eqnarray*} 
\end{proof}

\begin{lem}[\cite{H}]\label{lap}
Suppose that $\mathbb{M}$ has non-negative sectional curvature and that there exists a point $p \in \mathbb{M}$ such that the geodesic curvatures of all geodesic circles with center $p$ and radius $r \geq 1$ are uniformly bounded. Define $f: \mathbb{M}\backslash\bigl(\{p\}\cup \Cut(p)\bigr)\times\mathbb{R} \rightarrow \mathbb{R}$, $f(q,t) = \log (r(q))$, where $r$ is the distance in $\mathbb{M}$ to the point $p$. Let $\Sigma$ be a minimal surface of $\mathbb{M}\times\mathbb{R}$. Then, $$\Delta_{\Sigma}f \leq \frac{c_{1}}{r}|\nabla_{\Sigma}h|^{2},$$ for some constant $c_{1} > 0$ and $r \geq 1$.
\end{lem}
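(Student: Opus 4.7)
The plan is to reduce the computation of $\Delta_\Sigma f$ to the ambient Hessian of $f$ on $\mathbb{M}\times\mathbb{R}$, diagonalise that Hessian in geodesic polar coordinates, and then invoke the Hessian comparison theorem to extract the bound in the prescribed form.

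First I would fix geodesic polar coordinates $(r,\theta)$ on $\mathbb{M}\setminus(\{p\}\cup\Cut(p))$, writing the metric as $dr^2+J(r,\theta)^2\,d\theta^2$, and set $T=J^{-1}\partial_\theta$, so that $\{\partial_r,T,\partial_t\}$ is an orthonormal frame on the product. Starting from $\nabla f=(1/r)\partial_r$ and using $\hess_\mathbb{M} r(\partial_r,\cdot)=0$ together with $\hess_\mathbb{M} r(T,T)=k_g$, a short chain-rule computation shows that $\hess f$ on $\mathbb{M}\times\mathbb{R}$ is diagonal in this frame with eigenvalues $-1/r^2$, $k_g/r$, and $0$ respectively.

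Since $\Sigma$ is minimal, the mean-curvature term in the surface trace formula vanishes, so $\Delta_\Sigma f=\tr\hess f-\hess f(\eta,\eta)$ for any unit normal $\eta$ to $\Sigma$. Decomposing $\eta=a\partial_r+bT+\nu\partial_t$ with $a^2+b^2+\nu^2=1$, I would get
\begin{equation*}
\Delta_\Sigma f \;=\; \frac{a^2-1}{r^2}+\frac{k_g(1-b^2)}{r},
\end{equation*}
together with the identity $|\nabla_\Sigma h|^2=1-\nu^2=a^2+b^2$ obtained by projecting $\partial_t$ onto $T\Sigma$. This identifies exactly the quantity that must appear on the right-hand side of the target inequality.

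The decisive input is the Hessian comparison theorem: from $K_\mathbb{M}\geq 0$ one has $\hess_\mathbb{M} r\leq \hess_{\mathbb{R}^2} r$, hence $k_g\leq 1/r$; this is the standard Sturm/Riccati argument for $J_r/J$ applied to the radial Jacobi field $J$. Substituting into the preceding display gives
\begin{equation*}
\Delta_\Sigma f \;\leq\; \frac{a^2-1+1-b^2}{r^2}\;=\;\frac{a^2-b^2}{r^2}\;\leq\;\frac{a^2+b^2}{r^2}\;=\;\frac{|\nabla_\Sigma h|^2}{r^2},
\end{equation*}
and the condition $r\geq 1$ collapses $1/r^2\leq 1/r$, yielding the claim with $c_1=1$. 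The only step of real substance is the Hessian comparison estimate; everything else is routine linear algebra in the adapted frame. It is worth noting that the uniform bound on the geodesic curvatures stated in the hypothesis is in fact redundant for this particular lemma, since Hessian comparison already supplies $k_g\leq 1/r\leq 1$ for $r\geq 1$; that hypothesis is needed elsewhere in the paper (e.g.\ in the Half-Space Theorem that invokes this lemma).
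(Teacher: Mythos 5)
Your proposal is correct, and while it shares the paper's overall skeleton (reduce $\Delta_{\Sigma}f$ to the ambient Hessian via minimality, diagonalise $\hess f$ in geodesic polar coordinates with eigenvalues $-1/r^{2}$, $k_{g}/r$, $0$, and identify $|\nabla_{\Sigma}h|^{2}$ with the squared horizontal part of the normal), you close the estimate in a genuinely different and sharper way. The paper first invokes the Laplacian comparison theorem to get $\Delta f\leq 0$, discards the trace, and bounds $-\hess f(\eta,\eta)\leq|\hess_{\mathbb{M}}f|\,|\Pi(\eta)|^{2}$; since $|\hess_{\mathbb{M}}f|^{2}=r^{-4}+k_{g}^{2}r^{-2}$, this crude norm bound needs a \emph{two-sided} control $|k_{g}|\leq C$, which is exactly where the hypothesis on geodesic circles enters. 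You instead keep the exact identity $\Delta_{\Sigma}f=(a^{2}-1)r^{-2}+k_{g}(1-b^{2})r^{-1}$ and observe that $k_{g}$ appears with the non-negative coefficient $(1-b^{2})r^{-1}$, so only the \emph{upper} bound $k_{g}\leq 1/r$ from Hessian comparison (valid off $\Cut(p)$ since $K_{\mathbb{M}}\geq 0$) is required; this yields the explicit constant $c_{1}=1$ and, as you correctly point out, renders the uniform geodesic-curvature hypothesis redundant for this lemma (it is still used elsewhere, e.g.\ in Rosenberg's Half-Space Theorem). Your computations check out: the trace formula for minimal $\Sigma$, the diagonalisation, the identity $|\nabla_{\Sigma}h|^{2}=a^{2}+b^{2}$, and the chain $\frac{a^{2}-b^{2}}{r^{2}}\leq\frac{a^{2}+b^{2}}{r^{2}}\leq\frac{|\nabla_{\Sigma}h|^{2}}{r}$ for $r\geq 1$ are all valid.
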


\begin{proof}
Denote by $\nabla f$, $\Delta f$ and $\hess{f}$ respectively the gradient, the laplacian and the hessian of $f$ in $\mathbb{M}\times\mathbb{R}$. Since $\mathbb{M}$ has non-negative curvature, by the Laplacian comparison theorem we have $$\Delta_{\mathbb{M}}r \leq \frac{1}{r}.$$ But $f$ does not depend on the height, so
$$\Delta f = \Delta_{\mathbb{M}}f = \frac{\Delta_{\mathbb{M}}r}{r} - \frac{|\nabla_{\mathbb{M}}r|^{2}}{r^{2}} \leq 0.$$ 

Let $E_{1}, E_{2}, \eta$ be an orthonormal frame in a neighborhood of a point of $\Sigma$, where $\eta$ is normal to $\Sigma$. Write $\nabla f = \nabla_{\Sigma}f + \langle\nabla f,\eta\rangle \eta$. Since $\Sigma$ is minimal we have
\begin{eqnarray*}
\Delta f &=& \sum_{i=1}^{2}\langle\nabla_{E_{i}}\nabla f,E_{i}\rangle + \langle\nabla_{\eta}\nabla f,\eta\rangle\\
&=& \sum_{i=1}^{2}\langle\nabla_{E_{i}}\nabla_{\Sigma}f,E_{i}\rangle + \sum_{i=1}^{2}\langle\nabla f,\eta\rangle\langle\nabla_{E_{i}}\eta,E_{i}\rangle + \langle\nabla_{\eta}\nabla f,\eta\rangle\\
&=& \Delta_{\Sigma}f + \langle\nabla f,\eta\rangle H + \hess{f}(\eta,\eta)\\\\
&=& \Delta_{\Sigma}f + \hess{f}(\eta,\eta).
\end{eqnarray*}

Now, let $V$ be tangent to $\mathbb{M}$, $\xi = \frac{\partial}{\partial t}$ and $\Pi$ be the projection of $\mathbb{M}\times\mathbb{R}$ to $\mathbb{M}$. Again, since $f$ does not depend on the height, we have 
\begin{eqnarray*}
\hess{f}(\xi,\xi) &=& 0,\\
\hess{f}(V,V) &=& \hess_{\mathbb{M}}{f}(V,V).
\end{eqnarray*}

Then, $$\hess{f}(\eta,\eta) = \hess{f}\bigl(\Pi(\eta),\Pi(\eta)\bigr) = \hess_{\mathbb{M}}{f}\bigl(\Pi(\eta),\Pi(\eta)\bigr).$$ 

But $\Delta f \leq 0$, so 
\begin{eqnarray}\label{hess}
\Delta_{\Sigma}f &\leq& -\hess{f}_{\mathbb{M}}\bigl(\Pi(\eta),\Pi(\eta)\bigr)\nonumber\\ 
&\leq& |\hess_{\mathbb{M}}{f}||\Pi(\eta)|^{2}.
\end{eqnarray}

A simple calculation shows that 
\begin{equation}\label{grad}
|\Pi(\eta)| = |\nabla_{\Sigma}h|.
\end{equation}
 
Let $q \in \mathbb{M}$, $r(q) = d(q,p)$ and $v$ be a unit tangent vector to $\mathbb{M}$ at $q$. Thus
\begin{eqnarray*}
\hess_{\mathbb{M}}{f}(v,v) &=& \biggl\langle\nabla_{v}\biggl(\frac{\nabla_{\mathbb{M}}r}{r}\biggr),v\biggr\rangle\\
&=& \frac{1}{r}\langle\nabla_{v}\nabla_{\mathbb{M}}r,v\rangle + v\biggl(\frac{1}{r}\biggr)\langle\nabla_{\mathbb{M}}r,v\rangle.
\end{eqnarray*}

When $v = \nabla_{\mathbb{M}}r$, $$\hess_{\mathbb{M}}{f}(v,v) = -\frac{1}{r^{2}}|\nabla_{\mathbb{M}}r|^{2}.$$

When $v = T$, the unit tangent vector to the geodesic circle of radius $r$ through the point $q$, $$\hess_{\mathbb{M}}{f}(v,v) = \frac{1}{r}\langle\nabla_{T}\nabla_{\mathbb{M}}r,T\rangle = \frac{1}{r}k_{g}(q),$$ where $k_{g}(q)$ is the geodesic curvature of the geodesic circle of radius $r$ centered at the point $q$. By the hypothesis about the geodesic circles of $\mathbb{M}$,
$$|\hess_{\mathbb{M}}{f}|^{2} = \frac{1}{r^{4}} + \frac{1}{r^{2}}k_{g}^{2} \leq  \frac{C}{r^{2}}.$$

Using equations \eqref{hess} and \eqref{grad}, the lemma follows.
\end{proof}

\section{Laminations}

\begin{defin}
Let $\Sigma$ be a complete, embedded surface in a $3$-manifold $N$. A point $p \in N$ is a limit point of $\Sigma$ if there exists a sequence $\{p_{n}\} \subset \Sigma$ which diverges to infinity in $\Sigma$ with respect to the intrinsic Riemannian topology on $\Sigma$, but converges in $N$ to $p$ as $n \to \infty$. Let $\mathcal{L}(\Sigma)$ denote the set of all limit points of $\Sigma$ in $N$; we call this set the limit set of $\Sigma$. In particular, $\mathcal{L}(\Sigma)$ is a closed subset of $N$ and $\bar\Sigma\backslash\Sigma \subset \mathcal{L}(\Sigma)$, where $\bar\Sigma$ denotes the closure of $\Sigma$.
\end{defin}

\begin{defin}
A codimension-$1$ lamination of a Riemannian $n$-manifold $N$ is the union of a collection of pairwise disjoint, connected, injectively immersed hypersurfaces, with a certain local product structure. More precisely, it is a pair $(\mathcal{L}, \mathcal{A})$ satisfying the following conditions:
\begin{enumerate}
\item $\mathcal{L}$ is a closed subset of $N$;
\item $\mathcal{A} = \{\varphi_{\beta}: \mathbb{D}\times (0,1) \rightarrow U_{\beta}\}_{\beta}$ is an atlas of coordinate charts of $N$, where $\mathbb{D}$ is the open unit ball in $\mathbb{R}^{n-1}$ and $U_{\beta}$ is an open subset of $N$;
\item For each $\beta$, there is a closed subset $C_{\beta}$ of $(0,1)$ such that $\varphi_{\beta}^{-1}(U_{\beta}\cap\mathcal{L}) =  \mathbb{D}\times C_{\beta}$.
\end{enumerate}
If all the leaves are minimal hypersurfaces, $(\mathcal{L}, \mathcal{A})$ is called a minimal lamination.
\end{defin}

\section{Parabolic Manifolds}

\begin{defin}
Given a point $p$ on a Riemannian manifold $N$ with boundary, one can define the hitting, or harmonic measure $\mu_{p}$ of an interval $I \subset \partial N$, as the probability that a Brownian path beginning at $p$ reaches the boundary for the first time at a point in $I$.
\end{defin}

\begin{prop}
Let $N$ be a Riemannian manifold with non empty boundary. The following are equivalent:
\begin{enumerate}
\item Any bounded harmonic function on $N$ is determined by its boundary values;
\item For some $p \in Int(N)$, the measure $\mu_{p}$ is full on $\partial N$, i.e, $\int_{\partial N}\mu_{p} = 1$;
\item If $h: N \rightarrow \mathbb{R}$ is a bounded harmonic function, then $h(p) = \int_{\partial N}h(x)\mu_{p}$;
\end{enumerate}
If $N$ satisfies any of the conditions above, then it is called parabolic.
\end{prop}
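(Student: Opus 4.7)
The plan is to prove the cyclic chain of implications $(2) \Rightarrow (3) \Rightarrow (1) \Rightarrow (2)$, using the standard relationship between bounded harmonic functions on $N$ and Brownian motion. Let $B_t$ denote Brownian motion on $N$ starting at $p \in \mathrm{Int}(N)$, and set $\tau = \inf\{t : B_t \in \partial N\}$; by definition of $\mu_p$, the conditional distribution of $B_\tau$ given $\tau < \infty$ is $\mu_p$. For $(2) \Rightarrow (3)$, the key observation is that for any bounded harmonic $h$, the process $M_t = h(B_{t \wedge \tau})$ is a bounded martingale. Condition (2) asserts $P_p(\tau < \infty) = 1$, so dominated convergence combined with the martingale property gives $h(p) = \mathbb{E}[M_0] = \mathbb{E}[h(B_\tau)] = \int_{\partial N} h(x)\, d\mu_p(x)$.

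For $(3) \Rightarrow (1)$, I would first apply (3) to the constant function $h \equiv 1$ to recover (2) at the point $p$. Next I would promote it to every interior point: the function $u(q) = P_q(\tau < \infty)$ is bounded and harmonic on $N$ with $u(p) = 1$, so the strong maximum principle (together with connectedness of $N$) forces $u \equiv 1$. Hence $\mu_q$ is full and the representation formula from $(2)\Rightarrow(3)$ holds at every interior $q$. Then, if $h_1, h_2$ are two bounded harmonic functions with $h_1 = h_2$ on $\partial N$, applying the representation formula to $h_1 - h_2$ at every interior point shows $h_1 \equiv h_2$, which is exactly (1).

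Finally, for $(1) \Rightarrow (2)$, I would compare the two bounded harmonic functions $u(q) = P_q(\tau < \infty)$ and the constant function $1$: both have boundary values identically equal to $1$ along the regular part of $\partial N$, so by (1) they must coincide, giving $u(p) = 1$. The main technical obstacle is making the notion of ``boundary values'' rigorous, since a bounded harmonic function on $N$ need not extend continuously to $\partial N$. Condition (1) should therefore be read in the probabilistic (or weak Dirichlet-problem) sense, and one must invoke enough regularity of $\partial N$ to know that $u(q) \to 1$ at a.e.\ boundary point. Once this framework is in place, all three implications reduce to optional stopping together with the maximum principle.
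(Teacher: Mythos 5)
The paper does not actually prove this proposition: it is stated as a known equivalence that fixes the definition of parabolicity, and the reader is referred to Grigor'yan's survey \cite{G} for ``equivalent definitions and properties of parabolic manifolds.'' So there is no in-paper argument to compare against; your proposal supplies the standard probabilistic proof that the paper implicitly delegates to the literature. Your cyclic scheme $(2)\Rightarrow(3)\Rightarrow(1)\Rightarrow(2)$ is the right one and each step is essentially correct: $(2)\Rightarrow(3)$ is optional stopping for the bounded martingale $h(B_{t\wedge\tau})$ under $P_p(\tau<\infty)=1$; $(3)\Rightarrow(1)$ correctly identifies that the representation formula must first be promoted from the single point $p$ to all interior points (via harmonicity of $q\mapsto P_q(\tau<\infty)$ and the strong maximum principle) before it can separate two bounded harmonic functions with equal boundary data; and $(1)\Rightarrow(2)$ is the contrapositive comparison of $P_q(\tau<\infty)$ with the constant $1$.

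Two small corrections. First, with the paper's definition, $\mu_p$ is the \emph{sub}-probability law $\mu_p(I)=P_p\bigl(B_\tau\in I,\ \tau<\infty\bigr)$, not the conditional distribution of $B_\tau$ given $\tau<\infty$; under your gloss condition (2) would be vacuously true. Your actual computations use the correct reading, so only the one-line description needs fixing. Second, the technical caveats you flag (regularity of $\partial N$ so that $P_q(\tau<\infty)$ attains boundary value $1$, continuity of bounded harmonic functions up to $\partial N$, and connectedness for the maximum principle) are genuine but standard; one should also note that on a stochastically incomplete manifold Brownian motion may explode before reaching $\partial N$, which is the setting Grigor'yan treats carefully. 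None of this invalidates the argument at the level of rigor the paper itself adopts.
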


An important property is that a proper subdomain of a parabolic manifold is parabolic, hence removing the interior of a compact domain does not alter parabolicity. Moreover if there exists a proper non negative superharmonic function on $N$, then $N$ is parabolic. For equivalent definitions and properties of parabolic manifolds see \cite{G}.

\begin{defin}
Let $N$ be a Riemannian manifold with non empty boundary. For $R > 0$, let $N(R) = \{p \in N; d(p,\partial N)< R\}$. We say that $N$ is $\delta$-parabolic if for every $\delta > 0$, $\widetilde{N} = N\backslash N(\delta)$ is parabolic.
\end{defin}

The following theorem gives a sufficient condition for a surface to be $\delta$-parabolic, for the proof see \cite{M.R1}.

\begin{teo1}\label{parab}
Let $N$ be a complete surface with non empty boundary and curvature function $K: N \rightarrow [0,\infty]$. Suppose that for each $R > 0$, the restricted function $K|_{N(R)}$ is bounded. Then $N$ is $\delta$-parabolic.
\end{teo1}

\section{Proper Minimal Immersions}

\begin{prop}\label{deltapar}
Let $N$ be a $3$-manifold with non negative Ricci curvature and sectional curvature bounded above by $\kappa > 0$. Suppose $\Sigma$ is a complete, orientable minimal surface with boundary in $N$, with a Jacobi function $u$. If $u \geq \epsilon$, for some $\epsilon > 0$, then $\Sigma$ is $\delta$-parabolic.
\end{prop}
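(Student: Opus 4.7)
The plan is to use the lower bound $u \geq \epsilon$ to perform a conformal change on the induced metric of $\Sigma$ that produces a complete surface of non-negative Gaussian curvature, and then deduce parabolicity via the conformal invariance of parabolicity in dimension two. Since $\Sigma$ is minimal and $u$ is a Jacobi function,
\[
\Delta_{\Sigma}u + \bigl(|A|^{2}+\ric_{N}(\eta,\eta)\bigr)u = 0,
\]
where $\eta$ is a unit normal along $\Sigma$. Define the conformally related metric $\tilde g := u^{2}g$ on $\Sigma$, with $g$ the induced metric.

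Using the two-dimensional conformal-change formula $K_{\tilde g} = u^{-2}(K_{g}-\Delta_{g}\log u)$ together with the Jacobi equation (to compute $\Delta_{g}\log u = \Delta_{g}u/u - |\nabla_{\Sigma}u|^{2}/u^{2}$) and the Gauss equation $K_{g} = K_{N}(T_{p}\Sigma) - \tfrac{1}{2}|A|^{2}$, I would obtain
\[
K_{\tilde g} = u^{-2}\!\left(\tfrac{1}{2}\bigl(K_{N}(T_{p}\Sigma)+\ric_{N}(\eta,\eta)\bigr) + \tfrac{1}{2}|A|^{2} + \frac{|\nabla_{\Sigma}u|^{2}}{u^{2}}\right) \geq 0,
\]
because $K_{N}(T_{p}\Sigma)+\ric_{N}(\eta,\eta)$ is half the scalar curvature of $N$ at $p$, non-negative under $\ric_{N}\geq 0$. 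Moreover $u\geq\epsilon$ gives $\tilde g \geq \epsilon^{2}g$, so $(\Sigma,\tilde g)$ is complete whenever $(\Sigma,g)$ is.

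Thus $(\Sigma,\tilde g)$ is a complete Riemannian surface with boundary of non-negative Gauss curvature. By the Bishop--Gromov volume comparison its intrinsic area grows at most quadratically, so a standard parabolicity criterion (e.g.\ $\int^{\infty}\!r/V(r)\,dr = \infty$) implies that $(\Sigma,\tilde g)$ is parabolic. In dimension two the Laplacian transforms as $\Delta_{\tilde g} = u^{-2}\Delta_{g}$, so harmonic functions, and hence parabolicity in the sense of the paper, are conformal invariants; therefore $(\Sigma,g)$ is parabolic. Since parabolicity passes to subdomains, $\Sigma \setminus \Sigma(\delta)$ is parabolic for every $\delta > 0$, which proves the claimed $\delta$-parabolicity.

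The main obstacle I anticipate is the rigorous parabolicity step for a surface with (possibly non-compact) boundary, since the classical volume-comparison/recurrence arguments are most cleanly stated without boundary. One can circumvent this by doubling $(\Sigma,\tilde g)$ along $\partial\Sigma$ to obtain a complete surface (without boundary) of non-negative curvature in the Alexandrov sense, where recurrence of Brownian motion is available; alternatively, one can verify the hypothesis of Theorem~\ref{parab} directly for $(\Sigma,\tilde g)$ by bounding $K_{\tilde g}$ on tubular neighborhoods of $\partial\Sigma$, using $u\geq\epsilon$ and boundary regularity for the minimal surface and Jacobi equations.
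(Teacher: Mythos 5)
Your conformal change is essentially the one the paper uses (the paper takes $\tilde g = u\,g$ rather than $u^{2}g$, but either works), and the computation giving $K_{\tilde g}\geq 0$ from $\ric(\cdot,\cdot)\geq 0$ is correct in substance (up to a harmless factor of $2$ on the term $K_{N}(T_{p}\Sigma)+\ric(\eta,\eta)$). The genuine gap is the step ``non-negative curvature $\Rightarrow$ parabolic.'' Bishop--Gromov and the volume test $\int^{\infty}r/V(r)\,dr=\infty$ are statements about complete manifolds \emph{without} boundary; here $\partial\Sigma$ may be non-compact, and parabolicity in the sense of the paper means that Brownian motion almost surely hits $\partial\Sigma$, which does not follow from non-negative curvature alone. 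This is precisely why Theorem~\ref{parab} carries the extra hypothesis that the curvature be bounded on each collar $N(R)$ of the boundary and concludes only $\delta$-parabolicity rather than parabolicity. Your first proposed repair, doubling along $\partial\Sigma$, does not work: the double of a Riemannian surface with boundary has curvature bounded below in the Alexandrov sense only when the boundary is convex (otherwise negative curvature concentrates along the seam), and there is no reason for $\partial\Sigma$ to be convex in $\tilde g$.

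Your second proposed repair --- verifying the hypotheses of Theorem~\ref{parab} for $(\Sigma,\tilde g)$ --- is the route the paper actually takes, but it needs more than ``$u\geq\epsilon$ and boundary regularity.'' The term $|\nabla_{\Sigma}u|^{2}/u^{2}$ appearing in $K_{\tilde g}$ must be bounded, and the paper obtains this from the interior Harnack inequality for the Jacobi equation, which yields a bound only at distance at least $\delta$ from $\partial\Sigma$; this is exactly why the conclusion is $\delta$-parabolicity and why it suffices. To apply Harnack with uniform constants one needs the coefficient $|A|^{2}+\ric(\eta,\eta)$ of the Jacobi operator to be bounded, which the paper arranges by first invoking Fischer-Colbrie--Schoen (the existence of a positive Jacobi function forces stability) and then curvature estimates for stable minimal surfaces. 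Note also that your argument never uses the hypothesis that the sectional curvature of $N$ is at most $\kappa$; it is needed for the upper bound on $K_{\tilde g}$ demanded by Theorem~\ref{parab}.
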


\begin{proof}
First note that a Riemannian surface $W$ is $\delta$-parabolic if and only if for all $\delta' > 0$, the surface $W\backslash W(\delta')$ is also $\delta$-parabolic. Thus, without loss of generality, we may assume that $\Sigma$ has the form $W\backslash W(\delta')$ for some $\delta' > 0$, where $W$ is a stable minimal surface with a positive Jacobi function $u \geq \epsilon$, which exists by \cite{FC.S}. By curvature estimates for stable, orientable minimal surfaces (\cite{Sc, R.S.T}), we may assume that $\Sigma$ has bounded Gaussian curvature. Consider the new Riemannian manifold $\widetilde{\Sigma}$, which is $\Sigma$ with the metric $\tilde{g} = u\langle\cdot,\cdot\rangle$ on $\Sigma$, where $\langle\cdot,\cdot\rangle$ is the Riemannian metric on $\Sigma$. Since $u \geq \epsilon$ the metric $\tilde{g}$ is complete. Moreover $\Delta_{\tilde{g}}f = u^{-1}\Delta f$, for any function on $\Sigma$ which has second derivative. Thus $\Sigma$ is $\delta$-parabolic if and only if $\widetilde{\Sigma}$ is $\delta$-parabolic.
Let $E_{1}, E_{2}, \eta$ be an orthonormal frame in a neighborhood of a point of $\Sigma$, where $\eta$ is normal to $\Sigma$. By the Gauss equation 
$$\ric(\eta,\eta) + |A_{\Sigma}|^{2} = \ric(E_{1},E_{1}) + \ric(E_{2},E_{2}) - 2K_{\Sigma}.$$ 
Then, as $u$ is a Jacobi function 
$$\Delta_{\Sigma}u + \bigl(\ric(E_{1},E_{1}) + \ric(E_{2},E_{2}) - 2K_{\Sigma}\bigr)u = 0.$$ 

So,
$$K_{\widetilde{\Sigma}} = \frac{K_{\Sigma} - \frac{1}{2}\Delta_{\Sigma} \log u}{u} = \frac{1}{2}\frac{\ric(E_{1},E_{1}) + \ric(E_{2},E_{2})}{u} + \frac{1}{2}\frac{|\nabla_{\Sigma}u|^{2}}{u^{3}},$$ 
which implies 
$$0 \leq K_{\widetilde{\Sigma}} \leq 2\frac{\kappa}{\epsilon} + \frac{1}{2\epsilon}\frac{|\nabla_{\Sigma}u|^{2}}{u^{2}}.$$

Choose $\delta > 0$ and let $\widetilde\Omega = \widetilde{\Sigma}\backslash \widetilde{\Sigma}(\delta)$. Let $\Omega$ be the corresponding submanifold on $\Sigma$. By the Harnack inequality, see \cite{Mo}, $\frac{|\nabla_{\Sigma}u|}{u}$ is bounded, and so one has that $K_{\widetilde{\Sigma}}$ is non negative and bounded on $\Omega$. It follows from theorem $1$ in section 4 that $\widetilde\Omega$ is parabolic, and hence $\Omega$ is parabolic. Since $\delta$ was chosen arbitrarily, we conclude that $\Sigma$ is $\delta$-parabolic.
\end{proof}

\begin{teoA}
Let $\mathbb{M}$ be a complete simply connected orientable non-compact surface such that $0 \leq K_{\mathbb{M}} \leq \kappa$. Let $f: \Sigma \rightarrow \mathbb{M}\times\mathbb{R}$ be an injective minimal immersion of a complete, connected Riemannian surface of bounded curvature. Then the map $f$ is proper.
\end{teoA}

\begin{proof}
Since the curvature of $f(\Sigma)$ is bounded, there exists an $\epsilon > 0$ such that for any point $p \in \mathbb{M}\times\mathbb{R}$, every component of $f^{-1}\bigl(B_{\epsilon}(p)\bigr)$, when pushed forward by $f$, is a compact disc and a graph over a domain in the tangent plane of any point on it, with a uniform bound on the area. It follows that if $p$ is a limit point of $f(\Sigma)$ coming from distinct components of $f^{-1}\bigl(B_{\epsilon}(p)\bigr)$, then there is a minimal disc $D(p)$ passing through $p$ that is a graph over its tangent plane at $p$, and $D(p)$ is a limit of components in $f^{-1}\bigl(B_{\epsilon}(p)\bigr)$. Let $D'(p)$ be any other such limit disc. Since $f$ is an embedding the unique possibility is that the discs are tangent at $p$, then the maximum principle imply that the two discs agree near $p$. This implies that the closure $\mathcal{L}\bigl(f(\Sigma)\bigr)$ of $f(\Sigma)$ has the structure of a minimal lamination.

The immersion $f$ is proper if and only if $\mathcal{L}\bigl(f(\Sigma)\bigr)$ has no limit leaves. Suppose $\mathcal{L}\bigl(f(\Sigma)\bigr)$ has a limit leaf $L$. Denote by $\widetilde{L}$ the universal cover of $L$. It was proved in \cite{M.P.R} that $\widetilde{L}$ is stable. So, by \cite{FC.S} $\widetilde{L}$ is totally geodesic, hence $L$ is totally geodesic. Suppose $\mathbb{M}$ is not flat (the case where $\mathbb{M}$ is flat was proved in \cite{M.R1}). By lemma \ref{tot.geod} a totally geodesic surface in $\mathbb{M}\times\mathbb{R}$ is a slice $\mathbb{M}\times\{t\}$, or is of the form $\alpha\times\mathbb{R}$, where $\alpha$ is a geodesic of $M$. 

Assume $L$ is a slice. Since $\Sigma$ is not proper, it is not equal to a slice. We can suppose $L = \mathbb{M}\times\{0\}$ and $H^{+}$ is a smallest halfspace containing $f(\Sigma)$. Since $\Sigma$ has bounded curvature, there is an $\epsilon > 0$ such that for every component $C$ of $f(\Sigma)$ in the slab between $L$ and $L_{\epsilon} = \{t = \epsilon\}$, the Jacobi function $u = \langle \nu,\partial_{t}\rangle$ satisfies $u \geq \lambda > 0$, where $\nu$ is the unit normal to $C$. Choose $0 < \delta < \epsilon$ such that $C(\delta) = \{p \in C; h \leq \delta\}$ is not empty, where $h$ is the height function. By proposition \ref{deltapar}, $C(\delta)$ is parabolic. But $h|_{C(\delta)}$ is a bounded harmonic function with the same boundary values as the constant function $\delta$. Hence $h|_{C(\delta)}$ is constant, which is a contradiction because $C(\delta)$ is not contained in a slice.

Now, suppose $L = \alpha\times\mathbb{R}$. Consider a one-sided closed $\epsilon$-normal interval bundle $N_{\epsilon}(L)$ that submerses to $\mathbb{M}\times\mathbb{R}$, with the induced metric. Observe that $N_{\epsilon}(L)$ is diffeomorphic to $\bigl(\alpha\times\mathbb{R}\bigr)\times [0,\delta]$, with $L = \bigl(\alpha\times\mathbb{R}\bigr)\times\{0\}$ as a flat minimal submanifold, and $L(\delta) = \bigl(\alpha\times\mathbb{R}\bigr)\times\{\delta\}$ having mean curvature vector pointing out of $N_{\epsilon}(L)$. For $\epsilon$ sufficiently small, we may assume that each component of $f(\Sigma)\cap N_{\epsilon}(L)$ is a normal graph of bounded gradient over the zero section $L$. Let $C$ be such a component which is a graph over a connected domain $\Omega$ of $L$ and let $L_{C}(\delta)$ be the part of $L_{\delta}$ which is also a normal graph over $\Omega$. Consider the surface $W_{\delta} = L(\delta)\backslash L_{C}(\delta)$. Under normal projection to $L$, $W_{\delta}\cup C$ is quasi-isometric to the flat plane $L$. It follows that $C$ is a parabolic Riemann surface with boundary. But the function $d := \dist(\cdot,L)$ is superharmonic, and has constant value $\delta$ on the boundary of $C$. Then $C$ is contained in $L(\delta)$, which contradicts the fact that $L$ is a limit leaf of $\mathcal{L}\bigl(f(\Sigma)\bigr)$.

\end{proof}

\section{Parabolicity of Minimal Surfaces}

\begin{teoB}
Let $\mathbb{M}$ be a complete non-compact surface satisfying the following conditions:
\begin{enumerate}
\item $K_{\mathbb{M}} \geq 0$; 
\item There is a point $p \in M$ such that the geodesic curvatures of all geodesic circles with center $p$ and radius $r \geq 1$ are uniformly bounded.
\end{enumerate}
Let $\Sigma$ be a surface of finite topology and one end and let $f: \Sigma \rightarrow \mathbb{M}\times\mathbb{R}$ be a proper minimal immersion. Suppose that $f$ is transverse to a slice $\mathbb{M}\times\{t_0\}$ except at a finite number of points, and that $f^{-1}(\mathbb{M}\times\{t_0\})$ contains a finite number of components. Then $\Sigma$ is parabolic.
\end{teoB}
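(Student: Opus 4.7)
The plan is to adapt the Meeks--Rosenberg argument behind Theorem \ref{mrp} to $\mathbb{M}\times\mathbb{R}$, using the ``almost superharmonicity'' of $\log r$ from Lemma \ref{lap} in place of the Euclidean estimate. Parabolicity is unchanged by removing a compact subsurface, so since $\Sigma$ has finite topology and one end, it suffices to show that some annular end $E=\Sigma\setminus K$ is parabolic. Enlarging $K$ to absorb all tangency points of $f$ with the slice and every compact component of $f^{-1}(\mathbb{M}\times\{t_0\})$, the remaining intersection $E\cap f^{-1}(\mathbb{M}\times\{t_0\})$ is a finite disjoint union of smooth proper arcs $\gamma_1,\dots,\gamma_n$ escaping to the end of $E$. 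These arcs cut $E$ into finitely many open sectors $S_1,\dots,S_m$, and on each sector the harmonic function $h-t_0$ has a constant sign. Since the conformal modulus of the end of $E$ is infinite whenever the modulus of the end of each $S_j$ is, it suffices to prove each sector is parabolic.

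Fix a sector $S$ in which $h\geq t_0$; the other sign is symmetric under the vertical reflection $(q,t)\mapsto(q,2t_0-t)$, which is an isometry of $\mathbb{M}\times\mathbb{R}$ preserving minimality. On $S$ we have two natural auxiliary functions: the non-negative harmonic function $u=h-t_0$, which vanishes on $\gamma_i\cap\partial S$, and the function $v=\log r$, which by Lemma \ref{lap} satisfies $\Delta_\Sigma v \leq \frac{c_1}{r}|\nabla_\Sigma h|^2 \leq \frac{c_1}{r}$. Since $f$ is proper and $S$ lies in the closed half-space $\{h\geq t_0\}$, any divergent sequence in $S$ must have $r+h\to\infty$, so a suitable combination of $u$ and $v$ will be proper on $S$. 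I would combine them (with appropriate cut-offs) to produce either a proper positive superharmonic function on $S$, or to verify via a direct capacity/modulus computation that the end of $S$ has infinite conformal modulus, which is equivalent to parabolicity.

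The principal technical obstacle is the error term $\frac{c_1}{r}|\nabla_\Sigma h|^2$ in Lemma \ref{lap}: unlike in $\mathbb{R}^3$, $\log r$ is not strictly superharmonic on a minimal surface in $\mathbb{M}\times\mathbb{R}$, so the construction must be arranged so this error can be absorbed (for example, integrating it against cut-off functions along an exhaustion and exploiting the $1/r$ decay together with $|\nabla_\Sigma h|^2\leq 1$). The argument naturally splits into two regimes according to whether $h$ is bounded on $S$ or not, with $v$ the dominant proper function in the former case and $u$ in the latter; Rosenberg's Half-space theorem and the $\delta$-parabolicity framework of Section 4 may serve as auxiliary tools in the bounded-height case, where they preclude a sector from behaving like a half-space slab.
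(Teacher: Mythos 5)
There is a genuine gap, and it sits at your very first reduction. You cut the end $E$ along the arcs $\gamma_1,\dots,\gamma_n$ into finitely many components $S_j$ on which $h-t_0$ has constant sign, and then assert that parabolicity of each $S_j$ implies parabolicity of $E$. That implication is unjustified, and it is in fact the whole content of the theorem: each $S_j$ is \emph{automatically} parabolic, being a subdomain of one of the two surfaces $\Sigma(+)=\{h\ge t_0\}$ or $\Sigma(-)=\{h\le t_0\}$, which are parabolic by Rosenberg's result in \cite{H} (and a proper subdomain of a parabolic surface is parabolic). So if your reduction were valid the theorem would follow with no further work, and the elaborate construction you then undertake on each sector would be unnecessary. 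The difficulty is that parabolicity of a surface with boundary is a statement about the harmonic measure of its boundary, and the boundaries $\partial S_j$ consist mostly of the \emph{interior} arcs $\gamma_i$: full harmonic measure of $\partial S_j$ only says Brownian motion reaches some $\gamma_i$, after which it passes into a neighbouring sector, and nothing prevents it from crossing the arcs forever while the ideal boundary of $E$ retains positive harmonic measure. This is exactly the scenario the paper must exclude, and it does so not by a gluing argument but by showing that each arc has a single well-defined limit point on the circle at infinity of the putatively non-parabolic conformal model $\{z:\epsilon\le|z|<1\}$ of $E$. That step is where the hypotheses are consumed: one-endedness and finite topology give a globally defined conjugate harmonic function $h^{*}$ on the end, the holomorphic map $g=h+ih^{*}$ parametrizes each intersection arc monotonically into the imaginary axis, and Fatou's and Privalov's theorems rule out an arc accumulating on a whole subinterval of $\mathbb{S}^{1}$. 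Once the limits exist, one finds a half-disc $U$ at infinity disjoint from $f^{-1}(\mathbb{M}\times\{t_0\})$, hence contained in $\Sigma(+)$ or $\Sigma(-)$ and therefore parabolic, yet without full harmonic measure if $E$ is non-parabolic --- a contradiction. None of this complex-analytic machinery appears in your outline, and without it the accumulation of the arcs on the ideal boundary is not controlled.

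A secondary problem: your candidate proper superharmonic function on a sector, built from $v=\log r$ and $u=h-t_0$, does not work as stated. Lemma \ref{lap} gives $\Delta_{\Sigma}\log r\le \frac{c_1}{r}|\nabla_{\Sigma}h|^{2}$, an upper bound by a \emph{positive} quantity, so $\log r$ (hence $u+v$) need not be superharmonic. In Theorem C the paper absorbs this error with the auxiliary function $-h\arctan (h)+\frac{1}{2}\log(h^{2}+1)$, whose Laplacian is $-|\nabla_{\Sigma}h|^{2}/(h^{2}+1)$, but that absorption relies on the hypothesis $|h|\le c_{2}\log r$, which is absent in Theorem B: on a sector where $h$ grows much faster than $\log r$ the term $\frac{c_1}{r}|\nabla_{\Sigma}h|^{2}$ cannot be dominated by $\frac{|\nabla_{\Sigma}h|^{2}}{h^{2}+1}$. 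Since the sectors are parabolic for free, this construction is in any case beside the point; the real work is the limit-point analysis described above.
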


\begin{proof}
We know from \cite{H} that the conditions on $\mathbb{M}$ imply that the surfaces
\begin{eqnarray*}
\Sigma(+) &:=& \{(p,t) \in \Sigma; t \geq t_0\},\\
\Sigma(-) &:=& \{(p,t) \in \Sigma; t \leq t_0\}
\end{eqnarray*} 
are parabolic. Suppose that $\mathcal{E}$ is an annular end representative which does not have conformal representative which is a punctured disc. Then this end has a representative which is conformally diffeomorphic to $\{z \in \mathbb{C}; \epsilon \leq |z| < 1\}$ for some positive $\epsilon < 1$. In this conformal parametrization, the unit circle corresponds to points at infinity on $\mathcal{E}$. After choosing a larger $\epsilon$, we may assume that $f|_{\mathcal{E}}$ intersects $\mathbb{M}\times\{t_0\}$ transversely in a finite positive number of arcs and that each noncompact arc of the intersection has one endpoint on the compact boundary circle $\{z \in \mathbb{C}; |z| = \epsilon\}$.

\begin{figure}[!ht]
\center
\includegraphics[scale=0.5]{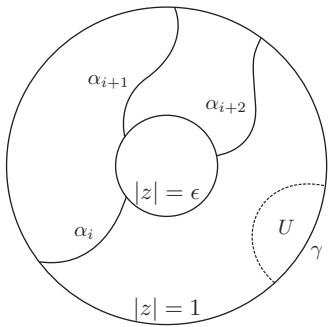}
\label{fig1}
\caption{The disc $U$}
\end{figure} 

We claim that it suffices to prove that each of the finite number of noncompact arcs $\alpha_{1},\dots,\alpha_{n}$ in $\mathbb{M}\times\{t_0\}$ has a well-defined limit on the unit circle $\mathbb{S}^{1}$ of points at infinity. In fact, assume the claim is true, then there is an open arc $\gamma \subset \mathbb{S}^{1}$ which does not contain limit points of $\alpha_{1},\dots,\alpha_{n}$. Hence, there would be an open half-disc $U \subset \mathcal{E}$ centered at a point in $\gamma$, such that $U\cap\bigl(f^{-1}(\mathbb{M}\times\{t_0\})\bigr) = \emptyset$, see figure $1$. But $U$ is a proper domain which is contained in one of the parabolic surfaces $\Sigma(+)$ or $\Sigma(-)$, so is parabolic. However $U$ does not have full harmonic measure, which is a contradiction.

Suppose $\alpha_{k}$ has two limit points $q_{1},q_{2}$ in $\mathbb{S}^{1}$. We first prove that at least one of the two interval components $I_1$, $I_2$ of $\mathbb{S}^{1}\backslash\{q_{1},q_{2}\}$ consists of limit points of $\alpha_{k}$. Suppose not and let $x_{1} \in I_{1}, x_{2} \in I_{2}$ be points which are not limit points. Since they are not limit points, there exists a $\delta > 0$ such that the radial arcs $\beta_{1}$ and $\beta_{2}$ in $\mathcal{E}$ of length $\delta$ and orthogonal to $\mathbb{S}^{1}$ at $x_{1}, x_{2}$ respectively, are disjoint from $\alpha_{k}$. Since $\alpha_{k}$ is proper and disjoint from $\beta_{1}\cup\beta_{2}$, the parametrized arc $\alpha_{k}(s)$ must eventually be in one of the two components of $\{z \in \mathcal{E}\backslash(\beta_{1}\cup\beta_{2});|z| \geq 1 - \delta\}$; see figure $2$. 
Thus, $\alpha_{k}$ cannot have both $q_{1}$ and $q_{2}$ as limit points, a contradiction. Now, suppose one of the intervals, say $I_{2}$, contains one point $z$ which is not a limit point of $\alpha_{k}$, then by the previous argument the interval $I_{1}$ cannot contain any point which is not a limit point. So one of the intervals consists of limit points of $\alpha_{k}$.

\begin{figure}[!ht]
\center
\includegraphics[scale=0.5]{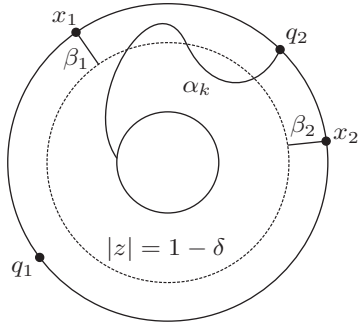}
\label{fig2}
\caption{The arc trapped in one of the components.}
\end{figure}

Since the height function is harmonic on $\mathcal{E}$ and the generator of the homology of $\mathcal{E}$ is a boundary in $\Sigma$, by Cauchy's Theorem there is a conjugate harmonic function to $h$, which we denote by $h^{*}$. Consider the holomorphic function $g = h + ih^{*}: \mathcal{E} \rightarrow \mathbb{C}$. As the slice $\mathbb{M}\times\{t_0\}$ is transverse to $\mathcal{E}$, we have $\langle\nabla h,\eta\rangle^{2} \neq 1$ for all points in an arc $\alpha_{k}$ and $h = 0$ in this arc, where $\eta$ is a unit normal to $\Sigma$. Moreover as $g$ is holomorphic we have $$|\nabla_{\Sigma} h^{*}(p)|^{2} = |\nabla_{\Sigma} h(p)|^{2} = 1 - \langle\nabla h,\eta\rangle^{2}(p) > 0, \forall p \in \alpha_{k},$$ so 
$h^{*}|_{\alpha_{k}}$ is strictly monotone. Thus $g$ restricted to any of the finite number of components in $\bigl(f^{-1}(\mathbb{M}\times\{t_0\})\bigr)\cap\mathcal{E}$, monotonically parametrizes an interval on the imaginary axis $\mathbb{R}(i) \subset \mathbb{C}$. Choose a closed half disc $\overline{D} \subset \overline{\mathcal{E}} = \mathcal{E}\cup\mathbb{S}^{1}$, centered at a point $p \in I_{1}$, where $I_{1}$, as discussed above, consists entirely of limit points of $\alpha_{1}$, and suppose that $\overline{D}$ is chosen sufficiently small so that $\partial_{\infty}D := \partial D\cap\mathbb{S}^{1} \subset I_{1}$. Since $g|_{\alpha_{k}}$ is injective we can take a compact interval $J \subset \displaystyle g(\cup_{k = 1}^{n}\alpha_{k}) \subset \mathbb{R}(i)$ which is disjoint from the endpoints of $g|_{\alpha_{k}}$, for all $k$, and choose $D$ sufficiently small such that $\overline{D}\cap\bigl(g^{-1}(J)\bigr) = \emptyset$. 

Observe that $g$ maps $D$ into $\mathbb{C}\backslash J$, so by the Riemann mapping theorem, the function $g|_{D}$ is essentially bounded in the sense that it maps $D$ into a domain that is conformally equivalent to an open subset of the unit disc.  It follows from Fatou's theorem that the holomorphic function $g|_{D}$ has radial limits almost everywhere, i.e., $D$ is conformally the unit disc, so radial limits are with respect to the radii of the unit disc.

\begin{figure}[!ht]
\centering
\includegraphics[scale=0.8]{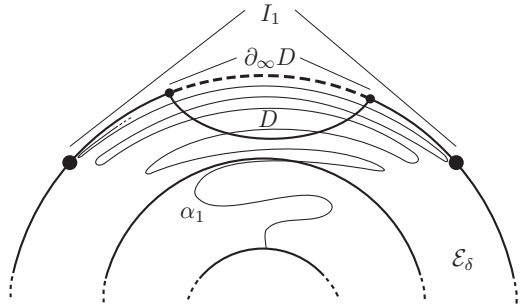}
\label{fig3}
\caption{The arc $\alpha_1$ accumulates in $I_{1}$}
\end{figure}

Consider the radial arc $\beta$ orthogonal to $\mathbb{S}^{1}$ at the point $p$ (the center of $I_{1}$). The arc $\beta$ divides $I_{1}$ into two intervals $I_{1}^{-}$ and $I_{1}^{+}$ and separates $D$ into two regions $D^{-}$ and $D^{+}$. Choose $\delta > 0$ small. We can suppose $D$ is inside the region $\mathcal{E}_{\delta} := \{z \in \mathcal{E};|z| \geq 1 - \delta\}$. Since $\alpha_{1}$ is proper, this arc will eventually be inside of $\mathcal{E}_{\delta}$. As $I_{1}$ is composed of accumulation points of $\alpha_{1}$ and $\partial_{\infty}D$ is not equal to $I_{1}$, the arc $\alpha_{1}$ leaves $D$ and returns to it an infinite number of times, and it does this crossing the boundaries of $D^{-}$ and $D^{+}$ infinitely many times, in each step getting closer to $\partial_{\infty}D^{-}$ and $\partial_{\infty}D^{+}$ respectively, see figure $3$. Then there exists an infinite number of arcs in $\alpha_{1}\cap D^{-}$ (respectively $\alpha_{1}\cap D^{+}$) converging to $\partial_{\infty}D^{-}$ (respectively $\partial_{\infty}D^{+}$), see figure $4$. Thus the points of $\partial_{\infty}D$ with radial limits for $g$ have a constant value which corresponds to the limiting endpoint of the curve  $g\circ\alpha_{1}$ in $\mathbb{R}(i)\cup\{\infty\}$. However by Privalov's theorem, a nonconstant meromorphic function on the unit disc cannot have a constant radial limit on a set of $\partial_{\infty}D$ with positive measure, a contradiction.

\begin{figure}[!ht]
\centering
\includegraphics[scale=0.8]{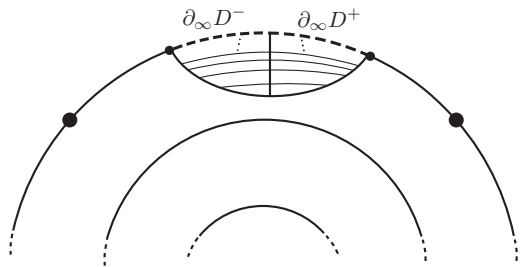}
\label{fig4}
\caption{Infinitely many arcs in $D^{-}$ and $D^{+}$}
\end{figure}

\end{proof}

\begin{teoC}
Let $\mathbb{M}$ be a complete non-compact surface satisfying the following conditions:
\begin{enumerate}
\item $0 \leq K_{\mathbb{M}} \leq \kappa$;
\item $\mathbb{M}$ has a pole $p$; 
\item The geodesic curvatures of all geodesic circles with center $p$ and radius $r \geq 1$ are uniformly bounded.
\end{enumerate} 
Let $\Sigma$ be a properly immersed minimal surface inside the region of $\mathbb{M}\times\mathbb{R}$ defined by $|h|\leq c_{2}\log r$, for some constant $c_{2}>0$ and $r\geq1$. Then $\Sigma$ is parabolic. Moreover, if $\Sigma$ has compact boundary, then $\Sigma$ has quadratic area growth and a finite number of ends. 
\end{teoC}

\begin{proof}
Let $p$ be the pole of $\mathbb{M}$. Since the map $\exp_{p}: T_{p}\mathbb{M} \rightarrow \mathbb{M}$ is a diffeomorphism, we have that $\phi: T_{p}\mathbb{M}\times\mathbb{R} \rightarrow \mathbb{M}\times\mathbb{R}$, defined by $\phi(v,s) = (\exp_{p}v,s)$ is a diffeomorphism and defines a coordinate system.

Let $r$ be the distance to $p$ on $\mathbb{M}$ extended to $\mathbb{M}\times\mathbb{R}$ in the natural way and $h$ be the height function on $\mathbb{M}\times\mathbb{R}$. Let $C_{R} = \{(q,s) \in \mathbb{M}\times\mathbb{R}; r(q) = R\}$ be the vertical cylinder of radius $R$ and let $\Sigma_{R}$ be the part of $\Sigma$ inside $C_{R}$. Let $B_{R}\bigl((p,0)\bigr)$ be the ball of $\mathbb{M}\times\mathbb{R}$ of center $(p,0)$ and radius $R$. Since $\mathbb{M}\times\mathbb{R}$ has the product metric and $p$ is a pole in $M$, the point $(p,0)$ is a pole in $\mathbb{M}\times\mathbb{R}$. Thus $\Sigma\cap B_{R}\bigl((p,0)\bigr)$ is inside the interior of $C_{R}$. Then it suffices to prove that $\Sigma_{R}$ has quadratic area growth as a function of $r$.

Using these coordinates we can define a horizontal vector field $X$ that is orthogonal to $\nabla r$ and $\nabla h$ and has norm 1, so $(\nabla r, \nabla h, X)$ is an orthonormal basis at each point of $\mathbb{M}\times\mathbb{R}$. Let $\eta$ be a unit normal to $\Sigma$, so
$$\langle \eta,\nabla r\rangle^{2} + \langle \eta,\nabla h\rangle^{2} + \langle \eta,X\rangle^{2} = 1,$$ 
$$|\nabla_{\Sigma}r|^{2} = 1 - \langle \eta,\nabla r\rangle^{2},$$ and
$$|\nabla_{\Sigma}h|^{2} = 1 - \langle \eta,\nabla h\rangle^{2}.$$
Hence, 
$$|\nabla_{\Sigma}r|^{2} + |\nabla_{\Sigma}h|^{2} = 1 + \langle \eta,X\rangle^{2} \geq 1.$$
Thus,
$$\int_{\Sigma_{R}}d\mu \leq \int_{\Sigma_{R}}\bigl(|\nabla_{\Sigma}r|^{2} + |\nabla_{\Sigma}h|^{2}\bigr)d\mu.$$

Consider the function $f: \Sigma \rightarrow \mathbb{R}$, $f = - h\arctan (h) + \frac{1}{2}\log(h^{2} + 1),$ where $h$ is the height function on $\mathbb{M}\times\mathbb{R}$. Since $h$ is harmonic on $\Sigma$, 
\begin{eqnarray*}
\Delta_{\Sigma}f &=& - \arctan(h)\Delta_{\Sigma}h -\frac{|\nabla_{\Sigma}h|^{2}}{h^2 + 1}\\
&=& -\frac{|\nabla_{\Sigma}h|^{2}}{h^2 + 1}.
\end{eqnarray*}

Consider now the function $g = \log r + f$. After rescaling the metric of $\Sigma$ and removing a compact subset of $\Sigma$ we may assume that $|h| \leq \frac{1}{2}\log r$. By lemma \ref{lap}, $g$ satisfies 
$$\Delta_{\Sigma}g \leq c_{1}\frac{|\nabla_{\Sigma}h|^{2}}{r} -\frac{|\nabla_{\Sigma}h|^{2}}{h^2 + 1} \leq 0.$$ 

Since $\log r$ is proper in $\{(q,t) \in \mathbb{M}\times\mathbb{R} \ | \ |h| \leq \frac{1}{2}\log r, r \geq 1\}$ and $\Sigma$ is proper, $\log r$ is proper in $\Sigma$. Moreover $g \geq \frac{3\pi}{4}\log r$, so $g$ is a non-negative proper superharmonic function on $\Sigma$. This proves that $\Sigma$ is parabolic.\\

Suppose $\partial\Sigma$ is compact. There exists $a > 0$ such that $g(\partial\Sigma) \subset [0,a]$. Let $t_{2} > t_{1} \geq a$. Since $g$ is proper, $g^{-1}\bigl([t_{1},t_{2}]\bigr)$ is compact; then we can apply the divergence theorem and use the fact that $g$ is superharmonic to obtain
\begin{equation}\label{mon.seq}
0 \geq \int_{g^{-1}([t_{1},t_{2}])}\Delta_{\Sigma}g\ d\mu = -\int_{g^{-1}(t_{1})}|\nabla_{\Sigma}g|\ dL + \int_{g^{-1}(t_{2})}|\nabla_{\Sigma}g|\ dL.
\end{equation}

It follows that the function $t \mapsto \int_{g^{-1}(t)}|\nabla_{\Sigma}g|\ dL$ is monotonically decreasing and bounded, so 
\begin{equation}\label{int.func}
\lim_{t \to \infty}\int_{g^{-1}(t)}|\nabla_{\Sigma}g|\ dL < \infty.
\end{equation}

Since $\Sigma = g^{-1}\bigl([0,\infty)\bigr)$ it follows from \eqref{mon.seq} and \eqref{int.func} that $\Delta_{\Sigma}g \in \lp^{1}(\Sigma)$. Furthermore, $\Delta_{\Sigma}g \geq \frac{1}{2}|\Delta_{\Sigma}f|$, for $r$ large, thus $\Delta_{\Sigma}f \in \lp^{1}(\Sigma)$. Hence,
$$\int_{\Sigma_{R}}\Delta_{\Sigma}f\ d\mu = \int_{\Sigma_{R}}\frac{|\nabla_{\Sigma}h|^{2}}{h^2 + 1}d\mu \leq \int_{\Sigma}\frac{|\nabla_{\Sigma}h|^{2}}{h^2 + 1}d\mu = c_{3},$$ for some positive constant $c_{3}$. Then, for $R \geq 1$,
\begin{eqnarray*}
\int_{\Sigma_{R}}|\nabla_{\Sigma}h|^{2}d\mu &\leq & \int_{\Sigma_{R}}\biggl(\frac{(\log R)^{2} + 1}{h^2 + 1}\biggr)|\nabla_{\Sigma}h|^{2}d\mu\\\\ 
&\leq & \bigl((\log R)^{2} + 1\bigr)c_{3} \leq c_{3}R^{2}. 
\end{eqnarray*}

Since $\Delta_{\Sigma}f \in \lp^{1}(\Sigma)$ and $|\Delta_{\Sigma}f| \geq c_{4}|\Delta_{\Sigma}\log r|$ ($c_{4} > 0$ a constant), we have $\Delta_{\Sigma}(\log r) \in \lp^{1}(\Sigma)$. Again by the Divergence Theorem,
\begin{eqnarray*}
\int_{\Sigma_{R}}\Delta_{\Sigma}\log r\ d\mu &=& \int_{\partial\Sigma}\frac{1}{r}\langle\nabla_{\Sigma}r,\nu\rangle dL + \int_{C_{R}\cap\Sigma}\frac{|\nabla_{\Sigma}r|}{R}dL\\
&=& {c_5} + \frac{1}{R}\int_{C_{R}\cap\Sigma}|\nabla_{\Sigma}r|dL,
\end{eqnarray*}
where $\nu$ is the outward conormal to the boundary of $\Sigma$. Thus $$\lim_{R \to \infty}\frac{1}{R}\int_{C_{R}\cap\Sigma}|\nabla_{\Sigma}r|dL < \infty,$$ which implies there is a constant $c_{6} > 0$ such that
$$\int_{C_{R}\cap\Sigma}|\nabla_{\Sigma}r|dL \leq c_{6}R.$$ 

By the coarea formula
$$\int_{\Sigma_{R}}|\nabla_{\Sigma}r|^{2}d\mu \leq \int_{1}^{R}\int_{C_{\rho}\cap\Sigma}|\nabla_{\Sigma}r|dLd\rho \leq c_{6}\int_{1}^{R}\rho \ d\rho \leq \frac{1}{2}c_{6}R^{2}.$$

Therefore $\Sigma$ has quadratic area growth.\\

Now, suppose $\Sigma$ has an infinite number of ends. Let $E$ be an end of $\Sigma$. Choose $0 < \delta < \min\bigl\{\inj_{\mathbb{M}\times\mathbb{R}},\frac{1}{\sqrt{\kappa}}\bigr\}$ such that for each positive integer $j$, there is a distance ball $B_{\delta}(q_{j})$ of $\mathbb{M}\times\mathbb{R}$ inside the region $\mathcal{R}_{j}$ between ${C_{j}}$ and $C_{j+1}$, with $q_{j} \in E$. By the monotonicity formula for minimal surfaces (see chapter 7 of \cite{C.M3}) $$|E\cap B_{\delta}(q_{j})| \geq \frac{c\delta^2}{e^{2\sqrt{\kappa}\delta}} =: c_{7},$$
where $c > 0$ is a constant and $\kappa = \sup K_{\mathbb{M}\times\mathbb{R}}$. Write $E_{n} = E\cap C_{n}$. Since in each region $\mathcal{R}_{j}$, $j < n$, we have a portion of $E$ of area at least $c_{7}$ it follows that $$|E_{n}| > c_{7} n.$$

Then in the cylinder $C_{n^2}$ we have $$c_{7} n^{2} \leq |E_{n^2}| \leq c_{8}n^{2}.$$ 

Since this holds for each end, choosing $n$ ends we obtain that the area of $\Sigma$ inside $C_{n^2}$ satisfies $$c_{9} n^{3} \leq |\Sigma_{n^2}| \leq c_{10}n^{2},$$ but for $n$ sufficiently large this leads to a contradiction. Hence, $\Sigma$ has a finite number of ends.

\end{proof}

\noindent \textsc{Instituto de Matem\'atica e Estat\'istica, UERJ\\ Rua S\~ao Francisco Xavier, 524\\Pavilh\~ao Reitor Jo\~ao Lyra Filho, 6º andar - Bloco B\\ 20550-900, Rio de Janeiro-RJ, Brazil}
\\


\begin{thebibliography}{20}

\bibitem{B.J.O}  G.P. Bessa, L.P. Jorge, G .Oliveira-Filho: {\it Half-space theorems for minimal surfaces with bounded curvature}. J. Differential Geom. 57 (2001), no. 3, 493-508.

\bibitem{C.H.M} M. Callahan, D. Hoffman \& W.H. Meeks III, {\it The structure of singly- periodic minimal surfaces}. Invent. Math. 99 (1990) 455–481, MR 1032877, Zbl 0695.53005.

\bibitem{C.M2} T.H. Colding, W.P. Minicozzi, II: {\it The Calabi-Yau conjectures for embedded
surfaces}. Ann. of Math. (2) 167 (2008), no. 1, 211-243.

\bibitem{C.M3} T. H. Colding \& W. P. Minicozzi II: {\it A Course in Minimal Surfaces}. AMS (2011), ISBN-10:  0-8218-5323-6, ISBN-13: 978-0-8218-5323-8.

\bibitem{C} P. Collin: {\it Topologie et courbure des surfaces minimales proprement plong\'ees de $\mathbb{R}^{3}$}. Annals of Mathematics (145)(1997), p.1-31.

\bibitem{C.K.M.R} P. Collin, R Kusner, W.H. Meeks III \& H. Rosenberg, {\it The topology, geometry and conformal structure of properly embedded minimal surfaces}. Journal of Differential Geometry, v. 67, p. 377-393, 2004.
 
\bibitem{MC} M. P. do Carmo, Geometria Riemanniana, Rio de Janeiro, Projeto Euclides, IMPA, 2a edição, 1988.

\bibitem{E.R} J.M. Espinar, H.Rosenberg. Complete constant mean curvature surfaces and Bernstein type theorems in $M^{2}\times\mathbb{R}$. J. Differential Geom. 82 (2009), no. 3, 611-628.

\bibitem{FC.S} D. Fischer-Colbrie \& R. Schoen, {\it The structure of complete stable minimal surfaces in 3-manifolds of nonnegative scalar curvature}. Comm. Pure Appl. Math. 33 (1980), no 2, 199-211.

\bibitem{F.M} C. Frohman \& W.H. Meeks III, {\it The ordering theorem for the ends of properly embedded minimal surfaces}. Topology 36(3) (1997) 605–617, MR 1422427, Zbl 0878.53008.

\bibitem{G} A. Grigor’yan, {\it Analytic and geometric background of recurrence and non-explosion of the Brownian motion on Riemannian manifolds}. Bull. Amer. Math. Soc. (N.S.) 36 (1999), no. 2, 135 – 249. 

\bibitem{H.M} D. Hoffman and William H. Meeks, III. {\it The strong halfspace theorem
for minimal surfaces}. Invent. Math., 101:373-377, 1990.

\bibitem{L} G.Liu, {\it Three-manifolds with nonnegative Ricci Curvature}. Inventiones mathematicae August 2013, Volume 193, Issue 2, pp 367-375.

\bibitem{M.P.R} W. H. Meeks, III; J.P\' erez, A. Ros: {\it Stable Constant Mean Curvature Surfaces}. Handbook of geometric analysis. No. 1, 301-380.]

\bibitem{M.R} W.H. Meeks, III; H. Rosenberg: {\it The uniqueness of the helicoid}. Annals of Mathematics, (161) (2005), 727-758.

\bibitem{M.R1} W.H. Meeks III \& H. Rosenberg, {\it Maximum Principles at Infinity}. Journal of Differential Geometry, v. 79, no. 1, p. 141-165, 2008.

\bibitem{M.R2} W.H. Meeks III \& H. Rosenberg, {\it The Minimal Lamination Closure Theorem}. Duke Journal of Math, v. 133, no. 3, p. 467-497, 2006.

\bibitem{Mo} J. Moser, {\it On Harnack's theorem for elliptic differential equations}. Comm. Pure Appl. Math. 14 1961 577–591.

\bibitem{H} H. Rosenberg, {\it Minimal surfaces in $M\times\mathbb{R}$}. Illinois J. Math. 46 (2002), no. 4, 1177–1195.

\bibitem{R.S.T} H. Rosenberg, R. Souam, E. Toubiana: {\it General curvature estimates for stable H-surfaces in $3$-manifolds and applications}. Journal of Differential Geometry, v. 84, p. 623-648, 2010.

\bibitem{Sc} R. Schoen, {\it Estimates for stable minimal surfaces in three-dimensional manifolds}. Seminar on minimal submanifolds, 111–126, Ann. of Math. Stud., 103, Princeton Univ. Press, Princeton, NJ, 1983. 

\bibitem{S.Y2} R. Schoen \& S.T. Yau, {\it Complete three dimensional manifolds with positive Ricci curvature and scalar curvature}. Ann. Math. Stud., 102 (1982), 209–228.


\bibitem{W1} B. White, {\it Complete Surfaces of Finite Total Curvature}. J. Differential Geom. 26 (1987), no. 2, 315-326.

\bibitem{W2} B. White, {\it Correction to "Complete Surfaces of Finite Total Curvature"}. J. Diff. Geom. 28 (1988),
359-360.

\end{thebibliography}
\end{document}